\newtheorem{theorem}{Theorem}[section]
\newtheorem{corollary}[theorem]{Corollary}
\newtheorem{lemma}[theorem]{Lemma}
\theoremstyle{definition}
\newtheorem{problem}{Problem}
\numberwithin{equation}{section}
\begin{document}
	\title{Hausdorff dimensions of Beatty multiple shifts}
   \author[Jung-Chao Ban]{Jung-Chao Ban}
	\address[Jung-Chao Ban]{Department of Mathematical Sciences, National Chengchi University, Taipei 11605, Taiwan, ROC.}
	\address{Math. Division, National Center for Theoretical Science, National Taiwan University, Taipei 10617, Taiwan. ROC.}
	\email{jcban@nccu.edu.tw}
	
	\author[Wen-Guei Hu]{Wen-Guei Hu}
	\address[Wen-Guei Hu]{College of Mathematics, Sichuan University, Chengdu, 610064, P. R. China}
	\email{wghu@scu.edu.cn}	
	
	\author[Guan-Yu Lai]{Guan-Yu Lai}
	\address[Guan-Yu Lai]{Department of Mathematical Sciences, National Chengchi University, Taipei 11605, Taiwan, ROC.}
	\email{gylai@nccu.edu.tw}

	\keywords{multiplicative subshifts, affine multiplicative subshifts, Hausdorff dimension, Minkowski dimension, Beatty sequence, disjoint cover.}
	
	\thanks{Ban is partially supported by the National Science and Technology Council, ROC (Contract No NSTC 111-2115-M-004-005-MY3) and National Center for Theoretical Sciences. Hu is partially supported by the National Natural Science Foundation of China (Grant No.12271381). Lai is partially supported by the National Science and Technology Council, ROC (Contract NSTC 111-2811-M-004-002-MY2).}
	
	
	
	\begin{abstract}
		   In this paper, the Beatty multiple shift is introduced, which is a generalization of the multiplicative shift of finite type (multiple SFT) [Kenyon, Peres and Solomyak, Ergodic Theory and Dynamical Systems, 2012] and the affine multiple shift [Ban, Hu, Lai and Liao, Advances in Mathematics, 2025]. The Hausdorff and Minkowski dimension formulas are obtained, and the coefficients of the formula is closely related to the classical disjoint covering of the positive integers in number theory. 
	\end{abstract}
	\maketitle
\section{Introduction}\label{sec 1}

Let $2\leq m\in \mathbb{N}$, $\mathcal{A}=\{0, 1, ..., m-1\}$ be the set of symbols and $A$ be an $m\times m$ binary matrix. Motivated by the work of Fan, Liao and Ma \cite{fan2012level} and the multifractal analysis of the multiple averages\footnote{Instead of reviewing the vast and rapidly evolving works and bibliography on the field of multiple averages and multiple ergodic theory, we will direct
the reader to a nice survey and some references for background.} \cite{fan2014some}, Kenyon, Peres and Solomyak \cite{kenyon2012hausdorff} introduced the concept of the \emph{multiplicative shift of finite type} (multiple SFT), that is, 
\begin{equation}
X_{A}^{(q)}=\{x=(x_i)_{i=1}^\infty\in \mathcal{A}^{\mathbb{N}}:A(x_{k},x_{qk})=1\text{
for all }k\in \mathbb{N}\}\text{,}
\end{equation}
and calculate the Hausdorff and Minskowski dimensions of $X_{A}^{(q)}$.

\begin{theorem}[Theorem 1.3, \cite{kenyon2012hausdorff}]
Let $A$ be a $m\times m$ binary matrix which is primitive. 
\begin{enumerate}
\item The Minkowski dimension of $X_{A}^{(q)}$ exists and equals 
\begin{equation}
\dim _{M}X_{A}^{(q)}=(q-1)^{2}\sum_{i=1}^{\infty }\frac{\log _{m}\left\vert
A^{i-1}\right\vert }{q^{i+1}}\text{,}  \label{5}
\end{equation}
where $\left\vert A\right\vert $ stands for the sum of all entries of the matrix $A$.

\item The Hausdorff dimension of $X_{A}^{(q)}$ is presented as 
\begin{equation}
\dim _{H}X_{A}^{(q)}=\frac{q-1}{q}\log _{m}\sum_{i=0}^{m-1}t_{i}\text{,}
\end{equation}
where $(t_{i})_{i=0}^{m-1}$ is the unique positive vector satisfying $t_{i}^{q}=\sum_{j=0}^{m-1}A(i,j)t_{j}$.

\item $\dim_{H}X_{A}^{(q)}=\dim_{M}X_{A}^{(q)}$ if and only if the row sums of $A$ are equal.
\end{enumerate}
\end{theorem}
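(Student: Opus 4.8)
The plan rests on the \emph{chain decomposition} of $\mathbb{N}$ under multiplication by $q$: every $n\in\mathbb{N}$ factors uniquely as $n=q^{j}k$ with $q\nmid k$, so $\mathbb{N}=\bigsqcup_{q\nmid k}\{k,qk,q^{2}k,\dots\}$ and the defining relations of $X_A^{(q)}$ couple only indices lying in one chain. Hence $X_A^{(q)}\cong\prod_{q\nmid k}X_A$, where $X_A$ is the one-sided SFT with matrix $A$, and a length-$N$ admissible word is exactly a choice, on each chain, of an $A$-admissible path along the segment $\{k,qk,\dots,q^{\ell_k-1}k\}\cap[1,N]$. I would also record the \emph{self-similarity} $x\in X_A^{(q)}\iff (x_{qn})_{n}\in X_A^{(q)}$ together with $A(x_k,x_{qk})=1$ for all $q\nmid k$; this is the structural fact that produces the exponent $q$ in the equation $t_i^{q}=\sum_j A(i,j)t_j$.

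For the Minkowski dimension I would use that the number of admissible length-$N$ words factorises as $\prod_{\ell\ge1}|A^{\ell-1}|^{\,r_\ell(N)}$, where $r_\ell(N)=\#\{k\le N:q\nmid k,\ q^{\ell-1}k\le N<q^{\ell}k\}$ counts the chain segments of length exactly $\ell$ inside $[1,N]$. A direct count gives $r_\ell(N)/N\to(q-1)^2 q^{-(\ell+1)}$, and since $|A^{\ell-1}|$ grows at most geometrically, dominated convergence of the series yields $\dim_M X_A^{(q)}=\lim_N \tfrac1N\sum_\ell r_\ell(N)\log_m|A^{\ell-1}|=(q-1)^2\sum_{i\ge1}q^{-(i+1)}\log_m|A^{i-1}|$; in particular the limit (not merely $\limsup$) exists.

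For the Hausdorff lower bound I would build the natural measure $\mu=\bigotimes_{q\nmid k}\nu$, putting on each chain the Markov measure with root distribution $\alpha_i=t_i/T$ (where $T=\sum_i t_i$) and transition $P(i,j)=A(i,j)\,t_j\,t_i^{-q}$; the defining equation makes $P$ stochastic, since $\sum_j P(i,j)=t_i^{-q}\sum_j A(i,j)t_j=1$. On the support $\log P(i,j)=\log t_j-q\log t_i$, so the row entropy is $H_i(P)=q\log t_i-\sum_j P(i,j)\log t_j$, and averaging $-\log\mu([x_1\cdots x_N])$ over the $\sim N(q-1)/q$ roots and the $\sim N/q$ transitions (a position of chain-depth $d$ has frequency $(q-1)q^{-(d+1)}$) gives $\tfrac1N(-\log\mu)\to \tfrac{q-1}{q}H(\alpha)+\tfrac1q\sum_i\bar\alpha_i H_i(P)$ with $\bar\alpha=\tfrac{q-1}{q}\sum_{d\ge0}q^{-d}\alpha P^{d}$. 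The identity $q\bar\alpha-\bar\alpha P=(q-1)\alpha$ collapses the second term to $(q-1)\sum_i\alpha_i\log t_i$, so the a.e. local dimension equals $\tfrac1{\log m}\cdot\tfrac{q-1}{q}\sum_i\alpha_i\log(t_i/\alpha_i)$; by Gibbs' inequality this is maximised exactly at $\alpha_i=t_i/T$, giving local dimension $\tfrac{q-1}{q}\log_m T$ for $\mu$-a.e.\ $x$ (the convergence is almost sure by the strong law, using that distinct chains are $\mu$-independent and each contributes a bounded term). The mass distribution principle then gives $\dim_H X_A^{(q)}\ge\tfrac{q-1}{q}\log_m T$.

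The matching upper bound is the step I expect to be the main obstacle: a single measure only produces lower bounds, and because the chains are truncated at different positions inside $[1,N]$ the Hausdorff dimension is genuinely smaller than $\dim_M$, so the cover must be adapted to this inhomogeneity rather than using equal-length cylinders. I would cover $X_A^{(q)}$ by \emph{rectangles} $\prod_{q\nmid k}[\text{path of length }\ell_k\text{ on chain }k]$, truncating each chain at a depth matched to its scale $m^{-q^{j}k}$, and count these rectangles weighted by $\mu$; the defining equation is precisely what makes each chain's generating sum telescope to the constant $T$, so the resulting $s$-dimensional Hausdorff sum stays bounded at $s=\tfrac{q-1}{q}\log_m T$, yielding $\dim_H\le s$. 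Finally, since $\dim_H\le\dim_M$ always, part (3) follows by comparing the two closed forms: if every row sum equals $r$ then $|A^{i-1}|=mr^{i-1}$ and $t_i\equiv r^{1/(q-1)}$, and both formulas reduce to $\tfrac{q-1}{q}+\tfrac1q\log_m r$; conversely, the Hausdorff-optimal transition $P(i,j)=A(i,j)t_j t_i^{-q}$ agrees with the uniform transition $A(i,j)/\sum_{j'}A(i,j')$ realising $\dim_M$ if and only if $t_j$ is independent of $j$ (using primitivity), which forces $\sum_j A(i,j)=t_i^{q}/t$ to be constant in $i$. Hence equality of the two dimensions is equivalent to all row sums of $A$ being equal.
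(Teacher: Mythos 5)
Your skeleton coincides with the route the paper itself relies on: this statement is quoted from \cite{kenyon2012hausdorff}, and the paper's own proof of its generalization (Theorem 1.1 there) uses exactly your chain decomposition of $\mathbb{N}$, your segment count $r_\ell(N)/N\to(q-1)^2q^{-(\ell+1)}$ for the Minkowski part, and the same Markov measure $\alpha_i=t_i/T$, $P(i,j)=A(i,j)\,t_j\,t_i^{-q}$ for the Hausdorff part, deferring the remaining work to \cite{kenyon2012hausdorff,ban2024hausdorffaffine}. Your part (1) is correct, and your Hausdorff lower bound is correct up to one repair: the contributions of distinct chains to $-\log\mu([x_1\cdots x_N])$ are independent but \emph{not} bounded (a chain of depth $\ell$ contributes on the order of $\ell$), so the strong law needs a variance criterion --- available because the number of chains of depth $\ell$ inside $[1,N]$ decays like $q^{-\ell}N$ --- rather than boundedness.

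The genuine gap is the Hausdorff upper bound, and the route you propose for it would fail. In the metric of $\mathcal{A}^{\mathbb{N}}$, a ``rectangle'' obtained by fixing coordinates on a set $S$ has diameter $m^{-j_0}$, where $j_0=\min(\mathbb{N}\setminus S)$ is the first unconstrained coordinate; constraining coordinates beyond $j_0$ multiplies the number of covering sets without shrinking any of them. Hence rectangle covers with chain-dependent truncation depths are never better than plain cylinders on an initial segment $[1,N]$, and those reproduce exactly the Minkowski bound: your covers cannot detect the difference between the two dimensions. Moreover, your premise that ``a single measure only produces lower bounds'' is false, and that is precisely how the gap is closed: the liminf form of Billingsley's lemma gives $\dim_H E\le s$ as soon as \emph{every} $x\in E$ satisfies $\liminf_N\,(-\log\mu([x_1\cdots x_N]))/(N\log m)\le s$. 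For the measure you already built, telescoping gives
\begin{equation*}
-\log\mu([x_1\cdots x_N])=\#\{k\le N:q\nmid k\}\,\log T+(q-1)\sum_{j\le \lfloor N/q\rfloor}\log t_{x_j}-\sum_{\lfloor N/q\rfloor< j\le N}\log t_{x_j},
\end{equation*}
so writing $S(N)=\sum_{j\le N}\log t_{x_j}$, the excess of $-\log\mu([x_1\cdots x_N])$ over $\frac{q-1}{q}N\log T$ equals $qS(\lfloor N/q\rfloor)-S(N)+O(1)$. If this were at least $\varepsilon N$ for all large $N$, then along $N=q^j$ the quantity $u_j=S(q^j)/q^j$ would satisfy $u_{j-1}-u_j\ge\varepsilon$, forcing $u_j\to-\infty$, which contradicts $|u_j|\le\max_i|\log t_i|$. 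Hence the liminf condition holds at $s=\frac{q-1}{q}\log_m T$ for every $x\in X_A^{(q)}$, and the upper bound follows from the same measure, with no adapted covers at all. (The converse half of your part (3) has a similar, smaller gap: that equality of the two dimensions forces the optimizing transition to be the uniform one needs an argument, e.g.\ strict concavity in the underlying variational formula together with primitivity; the forward computation you give is fine.)
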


Previous works related to the dimension results for multiple SFTs include:
(1). dimension theory for multidimensional multiple shift \cite{ban2024hausdorff, brunet2021dimensions, brunet2022contribution}; (2).
dimension spectrum \cite{fan2021multifractal, fan2011multifractal, fan2016multifractal,
peres2014dimensions, peres2012dimension}; (3). topological dynamics for multiple shift \cite{ban2022topologically}; (4). applications to multiple Ising models and the large deviation principle \cite{ carinci2012nonconventional,
chatterjee2016nonlinear,chazottes2014thermodynamic,kifer2015lectures,kifer2014nonconventionallarge}, and (5). dimension theory for $3$-multiple shifts \cite{ban2019pattern}.

Later, Ban, Hu, Lai and Liao \cite{ban2024hausdorffaffine} introduced the concept of the \emph{affine
multiple SFT} which is defined as
\[
X_{A}^{\left\langle p,a,q,b\right\rangle }=\left\{ x=(x_i)_{i=1}^\infty\in \mathcal{A}^{\mathbb{N}}:A(x_{pk+a},x_{qk+b})=1\text{ for all }k\in \mathbb{N}\right\} 
\text{.} 
\]
Selecting $p=1$ and $a=b=0$, it is evident that $X_{A}^{(q)}$ is a special case of $X_{A}^{\left\langle p,a,q,b\right\rangle }$.  The Hausdorff and Minkowski dimension formulas have been established therein.

\begin{theorem}[Minkowski dimension, \cite{ban2024hausdorffaffine}]\label{Thm: 4}
Let $A$ be an $m\times m$ irreducible binary matrix, $1\leq p<q$ and $a,b\in \mathbb{Z}$.
\begin{enumerate}
\item If $(p,q)\nmid (b-a)$, then 
\[
\dim _{M}X_{A}^{\left\langle p,a,q,b\right\rangle }=1-\frac{2}{q}+\frac{1}{q}\log_{m}\left\vert A\right\vert,
\]
where $(p,q)$ denotes the greatest common divisor of $p$ and $q$.

\item If $(p,q)\mid (b-a)$, then 
\[
\dim _{M}X_{A}^{\left\langle p,a,q,b\right\rangle }=1-\frac{2q_{1}-1}{(p,q)q_{1}^{2}}+\frac{(q_{1}-1)^{2}}{(p,q)}\sum_{i=2}^{\infty }\frac{\log
_{m}\left\vert A^{i-1}\right\vert }{q_{1}^{i+1}}\text{,} 
\]
where $q=q_{1}(p,q)$. In particular, if $(p,q)=1$, we have 
\begin{equation*}
\dim _{M}X_{A}^{\left\langle p,a,q,b\right\rangle
}=(q-1)^{2}\sum_{i=1}^{\infty }\frac{\log_{m}\left\vert A^{i-1}\right\vert 
}{q^{i+1}}\text{.}
\end{equation*}
\end{enumerate}
\end{theorem}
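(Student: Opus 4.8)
The plan is to compute the word-complexity $N_n := \#\{(x_1,\dots,x_n) : x \in X_A^{\langle p,a,q,b\rangle}\}$ and read off $\dim_M = \lim_{n\to\infty}\frac1n\log_m N_n$. The starting point is to encode the defining constraints as a directed graph $G$ on $\mathbb{N}$ with an arrow from $pk+a$ to $qk+b$ for each admissible $k$. Since each index is the left endpoint $pk+a$ of at most one arrow and the right endpoint $qk+b$ of at most one arrow, and since $p<q$ forces every arrow to increase the index, each connected component of $G$ is a (possibly infinite) directed path, a ``chain'' $v_0\to v_1\to v_2\to\cdots$. Because $A$ only couples consecutive vertices of a chain, admissible words factor over chains, and a chain meeting $\{1,\dots,n\}$ in exactly $t$ vertices contributes a factor $|A^{t-1}|$ (with $|A^0|=|I|=m$, using irreducibility of $A$ to guarantee that a vertex whose successor lies beyond $n$ is genuinely free). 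Hence, writing $c_t(n)$ for the number of chains with exactly $t$ vertices in $\{1,\dots,n\}$,
\[ \log_m N_n=\sum_{t\ge 1}c_t(n)\,\log_m|A^{t-1}|, \]
and the whole problem reduces to finding the densities $\gamma_t:=\lim_{n\to\infty}c_t(n)/n$.

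The residues of the endpoints modulo $(p,q)$ split the two cases, since left endpoints are $\equiv a$ and right endpoints $\equiv b$ modulo $(p,q)$. In case (1), $(p,q)\nmid(b-a)$, these classes are disjoint, so no index is simultaneously a left and a right endpoint; thus $G$ is a matching and every chain has at most two vertices. Counting is then immediate: the number of arrows with both endpoints $\le n$ is $\frac{n}{q}+O(1)$, each replacing two free symbols ($m^2$ choices) by $|A|$ admissible pairs, which gives $\dim_M=1-\frac2q+\frac1q\log_m|A|$. In case (2), $(p,q)\mid(b-a)$, all endpoints lie in a single class $r_0$ modulo $(p,q)$; the remaining $(p,q)-1$ classes consist entirely of isolated (free) vertices, contributing density $\frac{(p,q)-1}{(p,q)}$ of free symbols. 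Re-indexing the class $r_0$ by $i=(p,q)\,j+r_0$ turns the sub-system on that class into an affine multiple shift $X_A^{\langle p_1,\alpha,q_1,\beta\rangle}$ with $(p_1,q_1)=1$, where $p=p_1(p,q)$ and $q=q_1(p,q)$. Since these $n/(p,q)$ reduced coordinates carry all the constraints, the complexity factors as $N_n=m^{\#\{\text{free positions}\}}\cdot N^{\mathrm{cop}}_{\lfloor n/(p,q)\rfloor}$, so it suffices to treat the coprime case and then assemble.

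For the coprime case $(p,q)=1$, I would introduce the depth of an index as its number of predecessors in the chain; then $c_{\ge t}(n):=\sum_{s\ge t}c_s(n)$ equals the number of indices $\le n$ of depth exactly $t-1$, so $\gamma_t=\delta_{t-1}-\delta_t$ where $\delta_j$ is the density of depth-$j$ indices. The core computation is the identity $\delta_j=\frac{q-1}{q^{j+1}}$. I would prove it by peeling predecessors one step at a time: an index has depth $\ge 1$ iff it is $\equiv b \pmod q$, and, writing $i=b+qi_1$, its predecessor is $pi_1+a$; iterating, depth $\ge j$ is governed by $j$ successive congruences modulo $q$ on auxiliary integers $i_1,i_2,\dots$, each of which, because $p$ is invertible modulo $q$, holds for exactly a $\frac1q$-proportion of residues and is asymptotically independent of the previous ones. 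This yields density $q^{-j}$ for depth $\ge j$, hence $\delta_j=q^{-j}-q^{-j-1}$ and $\gamma_t=\frac{(q-1)^2}{q^{t+1}}$; summing $\sum_t\gamma_t\log_m|A^{t-1}|$ gives $(q-1)^2\sum_{i\ge1}\frac{\log_m|A^{i-1}|}{q^{i+1}}$. Feeding this back through the reduction of case (2) (with $q$ replaced by $q_1$, scaled by $\frac{1}{(p,q)}$, plus an extra $\frac{(p,q)-1}{(p,q)}$ from the free classes) produces the stated formula after the elementary identity $\frac{(p,q)-1}{(p,q)}+\frac{(q_1-1)^2}{(p,q)q_1^2}=1-\frac{2q_1-1}{(p,q)q_1^2}$.

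I expect the main obstacle to be the density identity $\delta_j=\frac{q-1}{q^{j+1}}$: one must control the edge effects of the finitely many small indices (which do not affect the density but must be shown negligible), justify the asymptotic independence of the successive congruences rigorously (this is precisely where $(p,q)=1$ and the disjoint-covering structure of $\mathbb{N}$ enter), and finally justify exchanging the limit $n\to\infty$ with the infinite sum over $t$. The last point follows from the uniform tail bound $c_t(n)/n\le q^{-(t-1)}$ together with $\log_m|A^{t-1}|=O(t)$, which dominates the series and makes the existence of the Minkowski dimension as a genuine limit automatic.
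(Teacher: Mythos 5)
Your proposal is correct, and its skeleton coincides with the route this paper itself takes to Theorem \ref{Thm: 4} (the theorem is quoted from the earlier paper, but it is recovered here by combining Theorem \ref{Thm: 1} with Corollary \ref{Cor: 1}): decompose $\mathbb{N}$ into chains of the map $pk+a\mapsto qk+b$ (the paper's $f$ and the partition (\ref{eq pf2})), factor the pattern count over chains so that a chain showing $t$ vertices in $[1,n]$ contributes $\log_m|A^{t-1}|$ (the paper's (\ref{eq pf5})), and obtain the coefficients as densities coming from arithmetic-progression structure (the paper's Lemma \ref{lma 1} and Corollary \ref{Cor: 1}). You differ in two organizational points, both worth noting. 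First, you count chains with exactly $t$ vertices in $[1,n]$ directly, via the bijection with depth-$(t-1)$ indices; the paper instead tracks only the densities $d_i$ of starting points of complete chains and accounts for truncated chains through the windows $E_i(n)$, $F_\ell(n)$ and the growth estimate (\ref{eq est}), which is exactly where the factors $\frac{1}{(\gamma/\alpha)^{i-1}}-\frac{1}{(\gamma/\alpha)^{i}}$ in Theorem \ref{Thm: 1}(1) come from. The two bookkeepings yield the same coefficients (your $\gamma_t=(q-1)^2/q^{t+1}$ in the coprime case agrees with the paper's coefficient of $\log_m|A^{t-1}|$), yours being leaner for integer data, the paper's being the version that survives for real $\alpha,\gamma$, where chains are only approximately geometric. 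Second, in case (2) you reduce to the coprime case by restricting to the residue class of $a$ modulo $(p,q)$ and re-indexing; the paper never performs this reduction --- it computes the densities for $\langle 9\rangle,\langle 10\rangle$ directly via the residue sums (\ref{eq case2})--(\ref{eq case2-2}) and lets Theorem \ref{Thm: 1}(1) do the assembly --- and your reduction makes the final algebraic identity more transparent. Finally, the step you flag as the main obstacle is easier than you fear: for $(p,q)=1$ the set of depth-$\geq j$ indices is, up to finitely many exceptions caused by positivity of the iterated $k$'s, a single arithmetic progression modulo $q^{j}$ (iterate the congruence $k\equiv p^{-1}(b-a)\ (\mathrm{mod}\ q)$), so the density $q^{-j}$ is an exact counting statement rather than an asymptotic-independence one; combined with your uniform bound $c_{\geq t}(n)\leq n/q^{t-1}+1$ and $\log_m|A^{t-1}|=O(t)$, dominated convergence justifies exchanging the limit in $n$ with the sum over $t$, so no genuine gap remains.
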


\begin{theorem}[Hausdorff dimension, \cite{ban2024hausdorffaffine}]
\label{Thm: 3}Let $A$ be an $m\times m$ irreducible
binary matrix. 
\begin{enumerate}
\item If $1\leq p<q$, $a, b\in \mathbb{Z}$ and $(p,q)\nmid (b-a)$, then 
\[
\dim _{H}X_{A}^{\left\langle p,a,q,b\right\rangle }=1-\frac{1}{p}-\frac{1}{q}+\frac{1}{p}\log _{m}\sum_{i=0}^{m-1}a_{i}{}^{\frac{p}{q}}, 
\]
where $a_i=\sum_{j=0}^{m-1}A(i,j)$.

\item If $1<p<q$, $a, b\in \mathbb{Z}$, $(p,q)\mid (b-a)$ and $p_{1}>1$, then 
\[
\dim _{H}X_{A}^{\left\langle p,a,q,b\right\rangle }=1-\frac{1}{p}-\frac{1}{q}+\frac{1}{(p,q)p_{1}q_{1}}+\sum_{i=2}^{\infty }\left(
\sum_{j=1}^{i}P_{ij}\right) \log _{m}t_{\emptyset ;i}\text{,} 
\]
where $p=p_{1}(p,q)$, $P_{ij}$ and $t_{\emptyset ;i}$ are defined in \cite[(3.4) and (3.5)]{ban2024hausdorffaffine}. 

\item If $1<p<q$, $a,b\in \mathbb{Z}$, $(p,q)|(b-a)$, $p_{1}=1$ and $A$ is primitive, then 
\[
\dim _{H}X_{A}^{\left\langle p,a,q,b\right\rangle }=1-\frac{1}{p}+\frac{q_{1}-1}{(p,q)q_{1}}\log_{m}\sum_{i=0}^{m-1}t_{i}\text{,} 
\]
where $\left( t_{i}\right) _{i=0}^{m-1}$ is the unique positive vector
satisfying $t_{i}^{q_1}=\sum_{j=0}^{m-1}A(i,j)t_{j}$.

\item If $1=p<q$, $a,b\in \mathbb{Z}$ and $A$ is primitive, then 
\begin{equation}
\dim _{H}X_{A}^{\left\langle p,a,q,b\right\rangle }=\frac{q-1}{q}\log_{m}\sum_{i=0}^{m-1}t_{i}\text{,}
\end{equation}
where $(t_{i})_{i=0}^{m-1}$ is the unique positive vector satisfying $t_{i}^{q}=\sum_{j=0}^{m-1}A(i,j)t_{j}$.

\item $\dim _{H}X_{A}^{\left\langle p,a,q,b\right\rangle }=\dim
_{M}X_{A}^{\left\langle p,a,q,b\right\rangle }$ if and only if the row sums of $A$ are equal.
\end{enumerate}
\end{theorem}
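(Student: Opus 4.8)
The plan is to prove all five parts at once by first reducing $X_A^{\langle p,a,q,b\rangle}$ to a disjoint family of independent one-dimensional chains, then computing a Hausdorff value for each family through a mass-distribution (lower bound) and efficient-cover (upper bound) argument; part (5) will follow by comparing the resulting expression with the Minkowski formula of Theorem \ref{Thm: 4}.

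First I would set up the \emph{constraint digraph} $G$ on $\mathbb{N}$, drawing a directed edge $pk+a\to qk+b$ for every $k$ with $pk+a,qk+b\ge 1$. Since $pk+a=v$ and $qk+b=v$ each determine $k$ uniquely, every vertex has in-degree and out-degree at most $1$, so $G$ is a disjoint union of directed paths together with isolated (\emph{free}) vertices; as $q>p$ the edge map strictly increases the index, so each path is forward-infinite and backward-finite and begins at a unique \emph{root}. The decisive observation is that a vertex is simultaneously a source $(=pk+a)$ and a target $(=qk'+b)$ iff $pk'-qk=b-a$ is solvable, i.e.\ iff $(p,q)\mid(b-a)$. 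Hence in case (1) every path has length $\le 1$ (isolated edges plus free vertices), whereas in the divisible cases the paths genuinely grow, consecutive vertices being in ratio $q_1/p_1$ after dividing out $(p,q)$; the alternative $p_1=1$ versus $p_1>1$ exactly governs the branching type of the root-indexing and is the origin of the tree-indexed quantities $P_{ij}$, $t_{\emptyset;i}$ in (2). In each case I would record the asymptotic densities in $\{1,\dots,N\}$ of roots, of vertices at each depth, and of the \emph{frontier} vertices $v$ whose successor exceeds $N$; these are precisely the disjoint-covering data of the abstract.

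Next, using $\pi\colon x\mapsto\sum_i x_i m^{-i}$ (bi-Lipschitz onto its image) I would pass to $[0,1]$ and invoke the variational principle expressing $\dim_H X_A^{\langle p,a,q,b\rangle}$ as the supremum over Borel probability measures $\mu$ on the shift of the $\mu$-essential infimum of $\liminf_{N}\frac{-\log_m\mu([x_1\cdots x_N])}{N}$. Since the chains are independent it suffices to take products $\mu=\prod_c\mu_c$ of per-chain Markov measures. For the lower bound I would build the canonical self-similar measure from the positive vector solving $t_i^{q_1}=\sum_j A(i,j)t_j$ — whose existence and uniqueness follow, where $A$ is primitive, from a nonlinear Perron–Frobenius argument for the monotone map $t_i\mapsto(\sum_j A(i,j)t_j)^{1/q_1}$ — by setting $P(i,j)=A(i,j)t_j/t_i^{q_1}$, a stochastic matrix by the defining equation. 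The point of this choice is the telescoping identity along a chain of depth $J$,
\[
-\log_m\mu_c(\cdot)=-\log_m\pi_{x_0}+q_1\log_m t_{x_0}+(q_1-1)\sum_{l=1}^{J-1}\log_m t_{x_l}-\log_m t_{x_J},
\]
so that, after summing over chains and weighting by the densities of the first step, the order-$N$ part collapses to $\tfrac{q_1-1}{(p,q)q_1}\sum_i\pi_i\log_m(t_i/\pi_i)$, maximized at the Gibbs choice $\pi_i\propto t_i$ to give the per-family value $\log_m\sum_i t_i$ with prefactor $\tfrac{q_1-1}{(p,q)q_1}$. In case (1) the length-$1$ chains instead produce the scale-ratio exponent $p/q$ between source and target and the sum $\sum_i a_i^{\,p/q}$, and in case (2) the identical scheme carried over the root-indexing tree yields the stated combination of $P_{ij}$ and $\log_m t_{\emptyset;i}$. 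Adding the contribution $1$ per free coordinate (densities $1-\tfrac1p-\tfrac1q$, $1-\tfrac1p$, etc.) reproduces the additive constants of (1)–(3), the extra $\tfrac{1}{(p,q)p_1q_1}$ in (2) coming from the partially resolved source coordinates.

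The matching upper bound I would extract from Billingsley's lemma together with an efficient cover built from the same transfer operator: grouping cylinders by the states occupied on the frontier and using submultiplicativity of the powers $A^{\ell}$ shows that no measure can beat the constructed value. Finally, (5) follows by writing both the Hausdorff value above and the Minkowski value of Theorem \ref{Thm: 4} as weighted averages of $\log_m$ of, respectively, a geometric-type and an arithmetic mean of the row sums $a_i$ along the chains, so that equality holds exactly when all $a_i$ coincide — that is, when the row sums of $A$ are equal — by the equality case of Jensen's inequality. The step I expect to be the main obstacle is the rigorous treatment of the frontier: the boundary vertices in $(N/q_1,N]$ are resolved only to an intermediate scale yet contribute at order $N$, so controlling them — and, in case (2), bookkeeping them across the root-indexing tree where the $P_{ij}$ arise — is what forces the careful density analysis and is the delicate heart of both bounds; a secondary point is justifying the interchange of $\liminf_N$ with the sum over the infinitely many chains.
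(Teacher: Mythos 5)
Your proposal is correct and follows essentially the same route as the paper: this theorem is quoted from \cite{ban2024hausdorffaffine}, and both that proof and the present paper's generalization (Theorem \ref{Thm: 1} together with Corollary \ref{Cor: 1}) rest on exactly your decomposition of $\mathbb{N}$ into $f$-chains with density data $d_i$ and $d_{i,j}$ (your roots/depths/frontier counts, computed by the same arithmetic-progression inclusion--exclusion as Lemma \ref{lma 1}), per-chain Markov measures built from the Perron--Frobenius system $t_i^{q_1}=\sum_{j}A(i,j)t_j$ with transitions $t_j/t_i^{q_1}$ and the finite-chain weights $t_{\emptyset;i}$, $P_{ij}$, plus a covering upper bound and Jensen's inequality for part (5). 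Only two cosmetic slips: not every path is forward-infinite (your own subsequent case analysis states the correct trichotomy governed by $(p,q)\mid(b-a)$ and $p_1$), and the term $\frac{1}{(p,q)p_1q_1}$ in (2) is the inclusion--exclusion correction in the free-vertex density $d_1$ coming from the overlap $S(p,a)\cap S(q,b)$, not a contribution of partially resolved source coordinates.
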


The purpose of this article is to introduce a more general class of multiple shifts. Let $\alpha, \beta, \gamma, \delta \in \mathbb{R}$ and $1\leq \alpha <\gamma $, the \emph{Beatty multiple shift} with respect to the tuple $\left( \alpha
,\beta ,\gamma ,\delta \right)$ is defined by
\[
X_{A}^{\left[ \alpha ,\beta ,\gamma ,\delta \right] }=\left\{
x=(x_{i})_{i=1}^{\infty }\in \mathcal{A}^{\mathbb{N}}:A(x_{\left\lfloor
\alpha k+\beta \right\rfloor },x_{\left\lfloor \gamma k+\delta \right\rfloor
})=1\text{ for all }k\in \mathbb{N}\right\},
\]
where $\left\lfloor \tau \right\rfloor $ denotes the floor function of $\tau$. The term Beatty multiple shift is derived from the notion of the Beatty sequence $\{\left\lfloor \xi n+\eta \right\rfloor \}_{n=1}^\infty$, which is a crucial method for studying the primes in the field of analytic number theory \cite{kuipers2012uniform}. It can be easily seen that the shift $X_{A}^{\left\langle p,a,q,b\right\rangle }$ is a special case of $X_{A}^{\left[ \alpha ,\beta ,\gamma ,\delta \right] }$ if $(\alpha ,\beta ,\gamma,\delta )=(p,a,q,b)$. However, most types of $X_{A}^{\left[\alpha, \beta, \gamma, \delta \right] }$ cannot be categorized as a
type of $X_{A}^{\left\langle p,a,q,b\right\rangle }$. Here, we provide the Hausdorff and Minkowski dimension formulas for $X_{A}^{\left[ \alpha ,\beta ,\gamma ,\delta \right] }$ and determine when these two dimensions coincide. To give a complete presentation of the dimension formula, we provide technical notations before presenting our main result.

Define a function 
\[f:\{ \left\lfloor \alpha k+\beta \right\rfloor :k\in \mathbb{N}\}\to \{ \left\lfloor \gamma k+\delta \right\rfloor :k\in \mathbb{N}\}
\]
by 
\begin{equation}\label{iteration}
  f(\left\lfloor \alpha k+\beta \right\rfloor )=\left\lfloor \gamma k+\delta
\right\rfloor.  
\end{equation}
Note that $f$ is well-defined by Lemma \ref{lma1}. 

Denote the set of positive integers of the form $\left\lfloor \alpha k+\beta \right\rfloor$ for some $k\in \mathbb{N}$ by 
\[
S(\alpha ,\beta ):=\left\{ \left\lfloor \alpha k+\beta \right\rfloor :k\in 
\mathbb{N}\right\} \cap \mathbb{N}.
\]
Let 
\begin{equation*}
A_{1} =\mathbb{N}\setminus \left[ S(\alpha ,\beta )\cup S(\gamma ,\delta )\right]
\end{equation*}
be the set of positive integers that are not of the form $\left\lfloor \alpha k+\beta \right\rfloor$ or $\left\lfloor \gamma k+\delta \right\rfloor$ for any $k\in \mathbb{N}$, and let
\begin{equation*}
 A_{2} =\left\{ x\in S(\alpha ,\beta )\setminus S(\gamma ,\delta ):f(x)\in
S(\gamma ,\delta )\setminus S(\alpha ,\beta )\right\} 
\end{equation*}
be the set of elements in $S(\alpha ,\beta )\setminus S(\gamma ,\delta )$ that can undergo exactly one iteration under $f$. For $i\geq 3$, we define the set of elements in $S(\alpha ,\beta )\setminus S(\gamma ,\delta )$ that can undergo exactly $i-1$ iterations under $f$ by
\begin{align*}
A_{i} =&\{x\in S(\alpha ,\beta )\setminus S(\gamma ,\delta ):f^{j}(x)\in
S(\alpha ,\beta )\cap S(\gamma ,\delta ) \\
&\text{ for all }1 \leq j\leq i-2\text{ and }f^{i-1}(x)\in S(\gamma ,\delta)\setminus S(\alpha ,\beta )\}\text{,}
\end{align*}
and define the set of elements in $S(\alpha ,\beta )\setminus S(\gamma ,\delta )$ that can undergo infinitely many iterations under $f$ by
\[
A_{\infty }=\left\{ x\in S(\alpha ,\beta )\setminus S(\gamma ,\delta
):f^{j}(x)\in S(\alpha ,\beta )\cap S(\gamma ,\delta )\text{ for all }j\geq
1\right\} \text{.} 
\]
For $i\in \mathbb{N}\cup \{\infty \},$ we define the \emph{density} of $A_i$ by
\begin{equation}
d_{i}=\lim_{\left\vert n_{1}-n_{2}\right\vert \rightarrow \infty }\frac{\left\vert A_{i}\cap \lbrack n_{1},n_{2}]\right\vert }{\left\vert
n_{1}-n_{2}\right\vert }\text{,}  \label{1}
\end{equation}
whenever the limit (\ref{1}) exists. It is worth noting that if (\ref{1}) exists, then for $1\leq j\leq i\in \mathbb{N}\cup \{\infty \}$, the following limit exists as well. 
\[
d_{i,j}=\lim_{n\rightarrow \infty }\frac{\left\vert A_{i,j}(n)\right\vert }{n}\text{ and }d_{i}=\sum_{j=1}^{i}d_{i,j}\text{,} 
\]
where 
\[
A_{i,j}(n):=\left\{ x\in A_{i}:\left\vert \{x,\ldots ,f^{i-1}(x)\}\cap
\lbrack 1,n]\right\vert =j\right\}
\]
is the set of elements in $A_i$ such that exactly $j$ elements of $\{x,\ldots ,f^{i-1}(x)\}$ lie in the interval $[1,n]$.

Using the notations and concepts mentioned above, we present the formulas for the Minkowski and Hausdorff dimensions of the Beatty multiple shift $
X_{A}^{\left[ \alpha ,\beta ,\gamma ,\delta \right] }$.

\begin{theorem}\label{Thm: 1}
Let $A$ be an $m\times m$ irreducible binary matrix, $1\leq \alpha <\gamma\in \mathbb{R}$ and $\beta, \delta \in\mathbb{R}$. If $d_{i}$ exists for all $i\in \mathbb{N}\cup \{\infty \}$, then we have the following assertions.

\begin{enumerate}
\item The Minkowski dimension of $X_{A}^{\left[ \alpha ,\beta ,\gamma
,\delta \right] }$ is presented as follows. 
\begin{eqnarray*}
\dim _{M}X_{A}^{\left[ \alpha ,\beta ,\gamma ,\delta \right] }
&=&\sum_{i=1}^{\infty }\left\{ \frac{1}{\left( \frac{\gamma }{\alpha }\right) ^{i-1}}d_{i}+\left[ \frac{1}{\left( \frac{\gamma }{\alpha }\right)
^{i-1}}-\frac{1}{\left( \frac{\gamma }{\alpha }\right) ^{i}}\right] \left(
\sum_{j=i+1}^{\infty }d_{i}+d_{\infty }\right) \right\} \\
&&\times \log_{m}\left\vert A^{i-1}\right\vert \text{.}
\end{eqnarray*}

\item If $A$ is primitive, then 
\[
\dim _{H}X_{A}^{\left[ \alpha ,\beta ,\gamma ,\delta \right]
}=d_{1}+\sum_{i=2}^{\infty }d_{i}\log _{m}t_{\emptyset .i}+d_{\infty }\log
_{m}\sum_{i=0}^{m-1}t_{i}\text{,} 
\]
where $t_{\emptyset ,i}$ are defined in (\ref{eq 3.1-2}), and $(t_{i})_{i=0}^{m-1}$ is the
unique positive vector satisfying $t_{i}^{\frac{\gamma }{\alpha }}=\sum_{j=0}^{m-1}A(i,j)t_{j}$.

\item $\dim _{H}X_{A}^{\left[ \alpha ,\beta ,\gamma ,\delta \right] }=\dim_{M}X_{A}^{\left[ \alpha ,\beta ,\gamma ,\delta \right] }$ if and only if the row sums of $A$ are equal.
\end{enumerate}
\end{theorem}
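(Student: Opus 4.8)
The guiding idea is that $f$ partitions $\mathbb{N}$ into disjoint increasing orbits (chains), well-defined by Lemma~\ref{lma1}, and that the defining relations of $X_A^{[\alpha,\beta,\gamma,\delta]}$ decouple completely across chains: a chain occupying coordinates $c_0<c_1<\cdots<c_{i-1}$ imposes exactly the path constraint $A(x_{c_0},x_{c_1})=\cdots=A(x_{c_{i-2}},x_{c_{i-1}})=1$, while $A_1,A_2,\dots,A_\infty$ sort the chains by length ($A_1$ being the free coordinates). Write $r:=\gamma/\alpha$. The first step is the Beatty scaling estimate $f^{\,j}(x)=r^{\,j}x+O(r^{\,j})$, obtained from $k\approx(x-\beta)/\alpha$ and $f(x)=\lfloor\gamma k+\delta\rfloor$; the additive error is negligible against any window of length tending to infinity, so it pins down where each chain sits. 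Together with the existence of $d_i$, a head $x\in A_i$ places its smallest $j$ coordinates in $[1,n]$ precisely when $x$ lies in an interval of length $n\bigl(r^{-(j-1)}-r^{-j}\bigr)+O(1)$ for $j<i$, or $n\,r^{-(i-1)}+O(1)$ for $j=i$; hence
\[
d_{i,j}=d_i\bigl(r^{-(j-1)}-r^{-j}\bigr)\ (j<i),\quad d_{i,i}=d_i\,r^{-(i-1)},\quad d_{\infty,j}=d_\infty\bigl(r^{-(j-1)}-r^{-j}\bigr),
\]
which telescope to $\sum_{j\le i}d_{i,j}=d_i$.

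For (1) I count the admissible words on $\{1,\dots,n\}$. By independence of chains, a chain placing $j$ coordinates in $[1,n]$ admits exactly $|A^{\,j-1}|$ configurations, so
\[
\log_m N_n=\sum_{i\ge1}\sum_{j=1}^{i}|A_{i,j}(n)|\log_m|A^{\,j-1}|+\sum_{j\ge1}|A_{\infty,j}(n)|\log_m|A^{\,j-1}|+o(n),
\]
the $j$-tail being negligible since $r^{-j}\to0$. Dividing by $n$, inserting the density identities, and gathering the coefficient of each $\log_m|A^{\,i-1}|$ turns this into $r^{-(i-1)}d_i+\bigl(r^{-(i-1)}-r^{-i}\bigr)\bigl(\sum_{\ell>i}d_\ell+d_\infty\bigr)$, which is precisely the asserted Minkowski formula; this step is pure bookkeeping once the scaling estimate is available.

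For (2) I would run the thermodynamic argument of Kenyon--Peres--Solomyak and of \cite{ban2024hausdorffaffine} along the chain decomposition. The upper bound comes from a depth-$N$ cover assembled chain-by-chain; the lower bound from the mass distribution principle for the product measure $\mu=\prod_C\mu_C$, with $\mu_C$ the pressure-maximizing Markov measure on each finite chain and the eigenmeasure of $t_i^{\,r}=\sum_j A(i,j)t_j$ on each infinite chain. The normalizers of the finite-chain optima are exactly the $t_{\emptyset,i}$ of (\ref{eq 3.1-2}), and summing the local-dimension contributions with weights $d_i$, $d_\infty$ and $d_1$ yields the formula; specializing $\alpha=1,\gamma=q$ (so $d_\infty=(q-1)/q$) recovers the value $\frac{q-1}{q}\log_m\sum_i t_i$ as a consistency check. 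The main obstacle is the lower bound: as Beatty spacing is only asymptotically geometric, one must show that $\mu$-almost surely the cylinder masses track the diameters $m^{-N}$ up to subexponential error \emph{uniformly} over the infinitely many chain lengths, and it is the assumed existence of every $d_i$ (hence every $d_{i,j}$) that furnishes this simultaneous control and the requisite averaging.

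Finally, (3) follows by comparison. The bound $\dim_H\le\dim_M$ is automatic once (1) gives the box dimension. Re-telescoping the Minkowski formula by weight shows the Minkowski contribution of a length-$i$ chain equals $M_i:=\sum_{k<i}\bigl(r^{-(k-1)}-r^{-k}\bigr)\log_m|A^{\,k-1}|+r^{-(i-1)}\log_m|A^{\,i-1}|$, with limit $M_\infty$ for the infinite chain, so $\dim_H=\dim_M$ is equivalent to $\log_m t_{\emptyset,i}=M_i$ for all $i$ with $d_i>0$ and $\log_m\sum_j t_j=M_\infty$ when $d_\infty>0$. Each identity is the tightness of a Hölder/Jensen inequality pitting the pressure-maximizing chain measure against the uniform one, and strict concavity of $\log$ makes it hold exactly when $A$ has constant row sums; conversely constant row sums give $t_i\equiv s^{1/(r-1)}$ and $|A^{\,k-1}|=m\,s^{\,k-1}$, collapsing both sides to the same expression. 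Since $1\le\alpha<\gamma$ forces some nontrivial chain type to carry positive density, the comparison yields equality of the two dimensions iff the row sums of $A$ are equal.
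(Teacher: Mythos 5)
Your proposal follows essentially the same route as the paper's proof: the identical chain decomposition of $\mathbb{N}$ with the scaling estimate $f^{\ell}(x)=\left(\tfrac{\gamma}{\alpha}\right)^{\ell}(x\pm C)$ and per-chain pattern counting $\left\vert A^{\,j-1}\right\vert$ for the Minkowski dimension, and the same product-of-chain-measures construction (the finite-chain measures of \cite{ban2024hausdorffaffine} with $P_{i,j}$ replaced by $d_{i,j}$, and the Markov eigenmeasure built from $t_i^{\gamma/\alpha}=\sum_j A(i,j)t_j$ on infinite chains) for the Hausdorff dimension, deferring the technical mass-distribution estimates to \cite{ban2024hausdorffaffine} and \cite{kenyon2012hausdorff} just as the paper does. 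Your re-telescoped per-chain Jensen comparison for part (3) is in fact more explicit than the paper, which omits a proof of that part and implicitly inherits it from the equality criteria of the cited works, but the substance is the same.
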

We remark that if $d_\infty=0$, then the assumption of $A$ in (2) of Theorem \ref{Thm: 1} can be replaced by ``$A$ is irreducible''. In accordance with Theorem \ref{Thm: 1}, the sequence 
\begin{equation} \label{2}
\mathbf{d}=(d_{1},d_{2},d_{3},\ldots ,d_{\infty }) 
\end{equation}
is the only factor that determines the explicit formulas of the Hausdorff and Minkowski dimensions of $X_{A}^{\left[ \alpha ,\beta ,\gamma ,\delta\right]}$. The calculation of vector $\mathbf{d}$ is a challenge that relies on many previous works on the Beatty sequences \cite{graham1973covering, harman2015primes, harman2016primes,kuipers2012uniform}. The tuple $(\alpha ,\beta ,\gamma ,\delta )\in \mathbb{R}^{4}$ with $1\leq \alpha <\gamma $ is classified in Figure \ref{fig1}. \begin{figure}[]
\centering
\includegraphics[width=10cm]{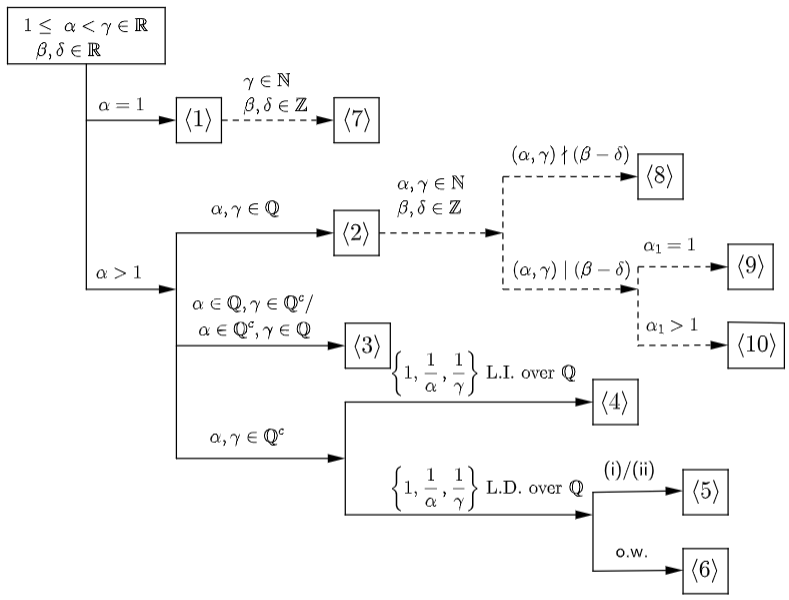}
\caption{(i) $\frac{n}{\alpha}+\frac{m}{\gamma}=1$ and $\frac{n\beta}{\alpha}+\frac{m\delta}{\gamma}\in \mathbb{Z}$, and (ii) $\frac{n}{\alpha}-\frac{m}{\gamma}=0$ with $(n,m)=1$ and $ 1-\frac{m}{\alpha}\geq \left\{\frac{m(\beta-\delta)}{\alpha}\right\}\geq \frac{m}{\alpha}$. The dashed line means the cases when it satisfies the conditions under such line.}
    \label{fig1}
\end{figure}
Thus,
\[
\left\{ (\alpha ,\beta ,\gamma ,\delta )\in \mathbb{R}^{4}:1\leq \alpha
<\gamma \right\} =\bigcup_{i=1}^{6}\left\langle i\right\rangle .
\]
For each region, we provide the explicit expression of $\mathbf{d}$ as in Theorem \ref{Thm: 2}. If there is no ambiguity, we write $\mathbf{d}=(d_{1},d_{2},\underline{d},d_{\infty })$, where $\underline{d}=\left(d_{i}\right)_{i\geq 3}$.

\begin{theorem}\label{Thm: 2}
The following assertions hold true.
\begin{enumerate}
\item If $(\alpha ,\beta ,\gamma ,\delta )\in \left\langle 1\right\rangle $, then $\mathbf{d}=(0,0,\underline{0},1-\frac{1}{\gamma })$, where $\underline{0}=(0)_{i\geq 3}$.

\item If $(\alpha ,\beta ,\gamma ,\delta )\in \left\langle 2\right\rangle $,
then $\mathbf{d}$ is defined in Section \ref{appendix}.

\item If $(\alpha ,\beta ,\gamma ,\delta )\in \left\langle 3\right\rangle
\cup \left\langle 4\right\rangle $, then 
\[
\mathbf{d}=\left( \frac{\left( \alpha -1\right) \left( \gamma -1\right) }{\alpha \gamma },\frac{\left( \alpha -1\right) \left( \gamma -1\right) }{\alpha ^{2}\gamma },\underline{d},0\right), 
\]
where 
\[
\underline{d}=\left( \frac{\left( \alpha -1\right) \left( \gamma -1\right) }{\alpha ^{i}\gamma }\right)_{i\geq 3}. 
\]

\item If $(\alpha ,\beta ,\gamma ,\delta )\in \left\langle 5\right\rangle $,
then $\mathbf{d}=(1-\frac{1}{\alpha }-\frac{1}{\gamma },\frac{1}{\alpha },\underline{0},0)$.
\end{enumerate}
\end{theorem}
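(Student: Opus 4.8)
\emph{Proof proposal.} The plan is to prove Theorem \ref{Thm: 2} by reducing the computation of each $d_i$ to two ingredients---the \emph{source density} $d\bigl(S(\alpha,\beta)\setminus S(\gamma,\delta)\bigr)$ and the distribution of orbit lengths of $f$---and then carrying this out region by region. The starting point is the classical fact that every Beatty sequence has uniform density $d(S(\alpha,\beta))=1/\alpha$ and $d(S(\gamma,\delta))=1/\gamma$, obtained from $\#\{k:\lfloor\alpha k+\beta\rfloor\le n\}=n/\alpha+O(1)$. I would next unwind the orbit structure of $f$: an $x\in S(\alpha,\beta)\setminus S(\gamma,\delta)$ is a \emph{source}, an element of $S(\gamma,\delta)\setminus S(\alpha,\beta)$ is a \emph{sink} (where $f$ is undefined), and $S(\alpha,\beta)\cap S(\gamma,\delta)$ consists of \emph{transit} points. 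Since $f(\lfloor\alpha k+\beta\rfloor)=\lfloor\gamma k+\delta\rfloor>\lfloor\alpha k+\beta\rfloor$ is strictly increasing, every orbit is a finite or one-sided infinite chain $\text{source}\to\text{transit}\to\cdots$, and $A_i$ collects exactly the sources whose chain has length $i$ (with $A_\infty$ the infinite chains). Thus computing $\mathbf d$ amounts to finding, for a typical source, the chance that its chain terminates after $i-1$ steps.

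The decisive observation is that iterating $f$ is conjugate to a map on the index $k$: if $\lfloor\gamma k+\delta\rfloor=\lfloor\alpha k'+\beta\rfloor$ then $f$ sends the $k$-th point to the $k'$-th, and the chain continues at $k$ precisely when $\{(\gamma k+\delta-\beta)/\alpha\}$ lies in a fixed interval of length $1/\alpha$. Hence the whole problem is governed by the itinerary of the fractional parts of the (roughly multiplicative) orbit $k\mapsto(\gamma/\alpha)k$ modulo $1$. When $\gamma/\alpha\notin\mathbb Q$---the generic case, corresponding to regions $\langle 3\rangle\cup\langle 4\rangle$---Weyl equidistribution makes the ``continue'' event occur with frequency $1/\alpha$ at each step. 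This yields intersection density $1/(\alpha\gamma)$, hence $d_1=1-1/\alpha-1/\gamma+1/(\alpha\gamma)=\tfrac{(\alpha-1)(\gamma-1)}{\alpha\gamma}$ and source density $\tfrac{\gamma-1}{\alpha\gamma}$, together with a geometric law for chain lengths: $d_i=\tfrac{\gamma-1}{\alpha\gamma}\,(1/\alpha)^{i-2}(1-1/\alpha)=\tfrac{(\alpha-1)(\gamma-1)}{\alpha^i\gamma}$ for $i\ge2$, and $d_\infty=0$. The identity $\sum_{i\ge2}d_i=\tfrac{\gamma-1}{\alpha\gamma}$ recovers the source density and serves as a consistency check.

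The remaining regions are those where $\gamma/\alpha\in\mathbb Q$ (condition (ii) of Figure \ref{fig1}) or where a covering relation $\tfrac n\alpha+\tfrac m\gamma=1$ holds (condition (i)); here the membership pattern of the orbit is \emph{periodic} in $k$, so each $d_i$ is computed by explicit counting within one period, and the inequalities in Figure \ref{fig1} decide which transit/sink pattern occurs. For region $\langle 5\rangle$ these conditions force $d\bigl(S(\alpha,\beta)\cap S(\gamma,\delta)\bigr)=0$, so every source maps directly to a sink, giving $d_2=1/\alpha$, $d_1=1-1/\alpha-1/\gamma$, and all other densities zero. For region $\langle 1\rangle$ the covering relation makes the union cover $\mathbb N$ (so $d_1=0$) while no chain ever reaches a sink, forcing all of the source mass $1-1/\gamma$ into $d_\infty$ and giving $\mathbf d=(0,0,\underline{0},1-1/\gamma)$. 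The genuinely mixed periodic case is region $\langle 2\rangle$, whose $\mathbf d$ is assembled from the same period-counting bookkeeping and is recorded in Section \ref{appendix}.

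I expect the main obstacle to be the correlated (rational-ratio) analysis underlying regions $\langle 1\rangle$, $\langle 2\rangle$ and $\langle 5\rangle$: once $\gamma/\alpha\in\mathbb Q$ the single-step frequency $1/\alpha$ is no longer the whole story, and one must track the exact fractional-part itinerary through a full period and match it to the disjoint-covering combinatorics of the positive integers encoded by the conditions in Figure \ref{fig1}. A subtler point even in the irrational case is that the ``memorylessness'' used for region $\langle 3\rangle\cup\langle 4\rangle$ requires \emph{joint} equidistribution of the multiplicative orbit $\{(\gamma/\alpha)^j\,\cdot\,\}\bmod 1$, not merely single-step Weyl; establishing this independence across the whole chain is the technical heart of the density computation.
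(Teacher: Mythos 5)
Your overall architecture is the same as the paper's: decompose $\mathbb{N}$ into $f$-chains (sources, transit points, sinks), reduce each $d_i$ to a density count, and handle the regions separately --- degenerate/covering regions by exact counting, generic regions by equidistribution plus a geometric law for chain lengths. The answers you derive also match. But there is a genuine gap at the decisive step. You characterize $\left\langle 3\right\rangle \cup \left\langle 4\right\rangle$ as ``$\gamma/\alpha\notin\mathbb{Q}$'' and claim that Weyl equidistribution then makes the per-step continuation events independent with frequency $1/\alpha$. That hypothesis is too weak, and the claim is false in that generality. The paper's regions are: $\left\langle 3\right\rangle$, $\alpha$ irrational and $\gamma$ rational (so $S(\gamma,\delta)$ is a finite union of arithmetic progressions and only one-dimensional equidistribution of $\{x/\alpha\}$ is needed); and $\left\langle 4\right\rangle$, $\{1,1/\alpha,1/\gamma\}$ linearly independent over $\mathbb{Q}$, where the paper invokes Harman's theorem to get joint equidistribution of the pair of fractional parts in $[0,1)^2$. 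Irrationality of $\gamma/\alpha$ is strictly weaker than either: take $\alpha=2+\sqrt{2}$ and $1/\alpha+1/\gamma=1/2$; then $\gamma/\alpha=\sqrt{2}\notin\mathbb{Q}$, but the pair $(\{x/\alpha\},\{x/\gamma\})$ is confined to a proper subtorus, membership in $S(\alpha,\beta)$ and $S(\gamma,\delta)$ is correlated, and the continuation frequency need not be $1/\alpha$ nor memoryless. For generic $\beta,\delta$ such tuples lie in region $\left\langle 6\right\rangle$, which the paper explicitly leaves as an open problem; if your claim were correct as stated, it would resolve that problem, which is a strong sign the hypothesis is wrong.

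Two smaller points. For $\left\langle 5\right\rangle$ you assert that the conditions of Figure \ref{fig1} ``force'' $d\bigl(S(\alpha,\beta)\cap S(\gamma,\delta)\bigr)=0$; this is exactly the number-theoretic input that must be supplied, and the paper obtains the stronger statement $|S(\alpha,\beta)\cap S(\gamma,\delta)|\leq 1$ from Harman's Theorems 6 and 8 (conditions (i) and (ii) respectively). Without such a citation or proof this step is unsupported --- density zero would indeed suffice, but then you also need to record that $f$-preimages of density-zero sets have density zero, via the growth estimate $f^{\ell}(x)\asymp(\gamma/\alpha)^{\ell}x$. Finally, you yourself flag the multi-step joint equidistribution along the orbit as ``the technical heart'' and leave it unproven; since that independence is precisely what produces the geometric factors $(1/\alpha)^{i-2}$ in $d_i$ for $i\geq 3$, the proposal as written establishes the formulas of part (3) only heuristically, and the heuristic can only be justified under the paper's actual hypotheses for $\left\langle 3\right\rangle$ and $\left\langle 4\right\rangle$, not under irrationality of $\gamma/\alpha$ alone.
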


Note that the cases $\left\langle 7\right\rangle, \left\langle 8\right\rangle,\left\langle 9\right\rangle,\left\langle 10\right\rangle$ form a classification of the set $\{(p,a,q,b)\in \mathbb{Z}^4 :  1\leq p <q\}$, i.e.,
\begin{equation*}
    \{(p,a,q,b)\in \mathbb{Z}^4 :  1\leq p <q\}=\bigcup_{i=7}^{10} \left\langle i\right\rangle.
\end{equation*}
In fact, the Hausdorff dimension of $X_A^{[p,a,q,b]}$ with $(p,a,q,b)\in \left\langle 7\right\rangle$ (resp. $\left\langle 8\right\rangle, \left\langle 9\right\rangle, \left\langle 10\right\rangle$) is obtained by Theorem \ref{Thm: 3} (4) (resp. (1), (3), (2)), and the Minkowski dimension of $X_A^{[p,a,q,b]}$ with $(p,a,q,b)\in \left\langle 7\right\rangle$ (resp. $\left\langle 8\right\rangle, \left\langle 9\right\rangle, \left\langle 10\right\rangle$) is obtained by Theorem \ref{Thm: 4} (2) (resp. (1), (2), (2)). 
\begin{corollary}\label{Cor: 1}
The following assertions hold true.

\begin{enumerate}
\item If $(\alpha ,\beta ,\gamma ,\delta )\in \left\langle 7\right\rangle $, then $\mathbf{d}=(0,0,\underline{0},1-\frac{1}{\gamma })$.

\item If $(\alpha ,\beta ,\gamma ,\delta )\in \left\langle 8\right\rangle $, then $\mathbf{d}=(1-\frac{1}{\alpha }-\frac{1}{\gamma },\frac{1}{\alpha },\underline{0},0)$.

\item If $(\alpha ,\beta ,\gamma ,\delta )\in \left\langle 9\right\rangle $,
then 
\[
\mathbf{d}=\left( 1-\frac{1}{\alpha }-\frac{1}{\gamma }+\frac{1}{(\alpha
,\gamma )\alpha _{1}\gamma _{1}},0,\underline{0},\frac{\gamma _{1}-1}{\alpha
\gamma _{1}}\right) \text{,} 
\]
where $\alpha=\alpha_1(\alpha ,\gamma )$ and $\gamma=\gamma_1 (\alpha ,\gamma )$.

\item If $(\alpha ,\beta ,\gamma ,\delta )\in \left\langle 10\right\rangle $, then 
\[
\mathbf{d}=\left( 1-\frac{1}{\alpha }-\frac{1}{\gamma }+\frac{1}{(\alpha
,\gamma )\alpha _{1}\gamma _{1}},\frac{(\gamma _{1}-1)(\alpha _{1}-1)}{%
\gamma _{1}\alpha \alpha _{1}},\underline{d},0\right) \text{,} 
\]
where 
\[
\underline{d}=\left( \frac{(\gamma _{1}-1)(\alpha_{1}-1)}{\gamma_{1}\alpha\alpha_{1}^{i-1}}\right)_{i\geq 3}
\text{.} 
\]
\end{enumerate}
\end{corollary}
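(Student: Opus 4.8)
The plan is to compute the density vector $\mathbf d$ directly in each integral region, since by Theorem \ref{Thm: 1} it alone fixes both dimensions. Write $g=(p,q)$, $p=p_1g$, $q=q_1g$ with $(p_1,q_1)=1$, matching the corollary's $\alpha=p,\gamma=q,\alpha_1=p_1,\gamma_1=q_1$. As the tuple is integral, $\left\lfloor pk+a\right\rfloor=pk+a$ and $f$ is the affine bijection $f(pk+a)=qk+b$ from the progression $S(p,a)$ onto $S(q,b)$. Every $A_i$ and $A_\infty$ will be a finite union of arithmetic progressions, so the limit (\ref{1}) exists and Theorem \ref{Thm: 1} applies; the task is to identify these progressions by the Chinese Remainder Theorem (CRT). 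In all cases $d_1=1-\tfrac1p-\tfrac1q+d_\cap$, where $d_\cap$ is the density of $S(p,a)\cap S(q,b)$, equal to $0$ when $g\nmid(b-a)$ and to $1/\operatorname{lcm}(p,q)=\tfrac1{p_1q_1g}$ when $g\mid(b-a)$ (CRT); this already yields the stated $d_1$ in every region.

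I would first dispose of the degenerate regions. In $\langle 8\rangle$ we have $g\nmid(b-a)$, so $S(p,a)\cap S(q,b)=\emptyset$; thus $f$ sends $S(p,a)$ bijectively onto the \emph{disjoint} set $S(q,b)$, no iterate can re-enter the domain, and every root terminates after one step. Hence $A_2=S(p,a)$ gives $d_2=\tfrac1p$ and all other densities vanish. In $\langle 7\rangle$ ($p=1$) and $\langle 9\rangle$ ($p_1=1$, i.e. $p\mid q$) the opposite holds: $S(q,b)\subseteq S(p,a)$, so $f$ maps $S(p,a)$ into its own domain and every root has an infinite forward orbit inside $S(p,a)\cap S(q,b)$. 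Therefore all intermediate densities vanish and $A_\infty=S(p,a)\setminus S(q,b)$ has density $\tfrac1p-\tfrac1q$ (with $\tfrac1p=1$ when $p=1$), giving the stated $d_\infty$.

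The substantive region is $\langle 10\rangle$: $g\mid(b-a)$ with $p_1>1$. Parametrize $x=pk+a$ and translate the orbit conditions into congruences on the index $k$. The root condition $x\notin S(q,b)$ excludes a single class mod $q_1$ (density $1-\tfrac1{q_1}$). The survival condition $f(x)\in S(p,a)$ selects a single class mod $p_1$; the key lemma, proved by induction on $j$, is that surviving $j$ successive steps selects a single class mod $p_1^{\,j}$, of density exactly $p_1^{-j}$ --- the induction step writes $k=k^{(1)}+p_1t$, computes the next index $k_1=\tfrac{q_1}{p_1}k+\tfrac{b-a}{p}$, and uses $(p_1,q_1)=1$ to solve for $t$ uniquely mod $p_1$. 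Since survival is a condition mod $p_1^{\,j}$ and rootedness a condition mod $q_1$, and $(p_1,q_1)=1$, CRT makes them independent. Consequently the density of $A_i$ (root, then $i-2$ survivals, then a termination of conditional density $1-\tfrac1{p_1}$) is $\tfrac1p\,(1-\tfrac1{q_1})\,p_1^{-(i-2)}\,(1-\tfrac1{p_1})=\tfrac{(q_1-1)(p_1-1)}{p\,q_1\,p_1^{\,i-1}}$ for $i\ge2$, matching the stated $d_2$ and $\underline d$, while $d_\infty=\lim_j p_1^{-j}=0$.

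The main obstacle is exactly this survival lemma in $\langle 10\rangle$: establishing that ``the orbit survives $j$ iterations'' is governed by residues mod $p_1^{\,j}$ with the precise geometric density $p_1^{-j}$, and that the root event decouples from it. Once $\mathbf d$ is computed, I would close with a consistency check as signalled in the paragraph preceding the statement: substituting each $\mathbf d$ into Theorem \ref{Thm: 1} must reproduce the dimensions of $X_A^{\langle p,a,q,b\rangle}$ from Theorems \ref{Thm: 3} and \ref{Thm: 4} --- for instance $\dim_H=\tfrac{q-1}{q}\log_m\sum_i t_i$ with $t_i^{q}=\sum_j A(i,j)t_j$ in $\langle 7\rangle$, and the Minkowski value $1-\tfrac2q+\tfrac1q\log_m|A|$ in $\langle 8\rangle$. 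This both validates the computation and explains the cross-references to the affine case.
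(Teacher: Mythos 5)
Your proposal is correct: all four density vectors come out right, and your key computations --- the CRT density $1/\mathrm{lcm}(p,q)=\tfrac{1}{p_1q_1g}$ for $S(p,a)\cap S(q,b)$, the disjointness argument giving $A_2=S(p,a)$ in $\left\langle 8\right\rangle$, the containment $S(q,b)\subseteq S(p,a)$ forcing all finite-index densities to vanish in $\left\langle 7\right\rangle$ and $\left\langle 9\right\rangle$, and the geometric decay $p_1^{-j}$ of the survival sets in $\left\langle 10\right\rangle$ --- reproduce exactly the stated values. Your route, however, is organized differently from the paper's. The paper never argues directly on the index $k$: it specializes the general rational-parameter machinery of Section \ref{appendix}, namely the residue sets $R_a^b(\beta)$, $R_c^d(\delta)$, the density Lemma \ref{lma 1} for intersections of two arithmetic progressions, and the product formulas (\ref{eq case2})--(\ref{eq case2-2}) expressing $d_i$ as $(\text{initial density})\times(\text{transition ratio})^{i-2}\times(\text{exit ratio})$; the proof of the corollary is then just the evaluation of the sums $\sum g(b,d,i,j)$ region by region. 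Your survival lemma --- that surviving $j$ iterations of $f$ pins $k$ to a single residue class mod $p_1^{\,j}$, decoupled by CRT from the root condition mod $q_1$ --- is in effect a proof of the Markov-type independence of successive iterates that the paper's formula (\ref{eq case2-1}) asserts ``by Lemma \ref{lma 1}'' without detailed justification; so your treatment is more self-contained for integer parameters, whereas the paper's detour through the Appendix buys uniformity, since the same formulas also handle all rational $\alpha,\gamma$ (region $\left\langle 2\right\rangle$), of which the corollary is a fourfold specialization. Two points you gloss over (as does the paper, at the same level of rigor): the finitely many boundary exceptions caused by $k\in\mathbb{N}$ rather than $k\in\mathbb{Z}$ --- in $\left\langle 7\right\rangle$ and $\left\langle 9\right\rangle$ one should note that only finitely many orbits can ever hit the finite set $S(q,b)\setminus S(p,a)$, which follows because $f$ is eventually increasing --- and the fact that in $\left\langle 10\right\rangle$ the set $A_\infty$ is not a finite union of progressions, yet still has density $0$ since its upper density is at most $p_1^{-j}/p$ for every $j$. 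Your closing consistency check against Theorems \ref{Thm: 3} and \ref{Thm: 4} is not needed for the proof, but it matches the remark the paper makes immediately after the corollary.
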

The Table \ref{tab: 1} provides the relation (of ${\bf d}$) between Theorem \ref{Thm: 2} and Corollary \ref{Cor: 1}. \begin{table}[]
    \centering
    \begin{tabular}{ |l|cccc|c|c|ccc|c| }
 \hline
 &$\langle 8\rangle$&$\langle 10\rangle$&$\langle 9\rangle$&$\langle 7\rangle$&$\langle 1\rangle$&$\langle 2\rangle$&$\langle 4\rangle$&$\langle 5\rangle$&$\langle 6\rangle$&$\langle 3\rangle$\\
 \hline
 $d_1$  & $\zeta$    &$\theta$&  $\theta$& 0  &0&$\xi$&$\mu$&$\zeta$&?&$\mu$\\
 $d_2$&   $\eta$  & $\iota_2$   &0& 0  &0&$\rho_2$&$\nu_2$&$\eta$&?&$\nu_2$\\
 $\underline{d}$&$\underline{0}$& $(\iota_i)_{i\geq 3}$&$\underline{0}$&$\underline{0}$&$\underline{0}$ &$(\rho_i)_{i\geq 3}$&$(\nu_i)_{i\geq 3}$&$\underline{0}$&?&$(\nu_i)_{i\geq 3}$\\
 $d_\infty$ &0& 0& $\kappa$ & $\lambda$&$\lambda$&I&0&0&?&0\\
  \hline
 Ref.& \multicolumn{4}{c|}{[Theorem \ref{Thm: 3}]}  & \multicolumn{6}{c|}{This paper} \\
 \hline
\end{tabular}
    \caption{$\zeta=1-\frac{1}{\alpha}-\frac{1}{\gamma}$, $\eta=\frac{1}{\alpha}$, $\theta=1-\frac{1}{\alpha}-\frac{1}{\gamma}+\frac{1}{(\alpha,\gamma)\alpha_1 \gamma_1}$, $\iota_i=\frac{(\gamma_1-1)(\alpha_1-1)}{\gamma_1\alpha\alpha_1^{i-1}}$, $\kappa=\frac{1}{\alpha}\frac{\gamma_1-1}{\gamma_1}$, $\lambda=1-\frac{1}{\gamma}$, $\mu=\frac{(\alpha-1)(\gamma-1)}{\alpha\gamma}$, and $\nu_i= \frac{(\gamma-1)(\alpha-1)}{\gamma\alpha^i}$, and $\xi, \rho_i$ are defined in Section \ref{appendix}.}
    \label{tab: 1}
\end{table}
It is easy to verify that the Hausdorff and Minkowski dimensions obtained
from Corollary \ref{Cor: 1} and Theorem \ref{Thm: 1} coincide with the formulas in Theorem \ref{Thm: 3} and Theorem \ref{Thm: 4}. Thus, it is
evident that Theorem \ref{Thm: 1} and Theorem \ref{Thm: 2} have a more general nature than Theorem \ref{Thm: 3} and Theorem \ref{Thm: 4}. It is unfortunate that we cannot solve all cases. Region $\left\langle 6\right\rangle$ remains as an open problem.

\begin{problem}
How to determine the infinite sequence $\mathbf{d}=(d_{1},d_{2},d_{3},\ldots,d_{\infty })$ for $(\alpha ,\beta ,\gamma ,\delta )\in \left\langle
6\right\rangle $?
\end{problem}

We end this section by making two remarks. First, the numbers $\alpha ,\beta,\gamma $ and $\delta $ in $X_{A}^{\left[ \alpha ,\beta ,\gamma ,\delta\right] }$ can be selected as real numbers, which indicates that the class of $X_{A}^{\left[ \alpha ,\beta ,\gamma ,\delta \right] }$ is significantly larger than that of $X_{A}^{\left\langle p,a,q,b\right\rangle }$. Second, the Hausdorff and Minkowski dimensions are calculated using the infinite sequence $\mathbf{d}$, which is also closely associated with the distribution property of the Beatty sequences. This bridge ties together the dimension theory of multiple SFTs and the research on Beatty sequences in number theory. 

\section{Proof of Theorems}
In this section, we give the proofs of Theorems \ref{Thm: 1} and \ref{Thm: 2}. The following lemma provides that $f$ in (\ref{iteration}) is well-defined.
\begin{lemma}\label{lma1}
    Let $1\leq\gamma\in\mathbb{R}$, $\delta\in\mathbb{R}$ and $k_1,k_2\in\mathbb{N}$. Then, $\left\lfloor \gamma k_1+\delta\right\rfloor= \left\lfloor \gamma k_2+\delta\right\rfloor$ if and only if $k_1= k_2$. 
\end{lemma}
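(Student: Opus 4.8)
The plan is to prove the contrapositive of the nontrivial direction, using only the monotonicity of the floor function together with the hypothesis $\gamma \geq 1$. The forward implication $k_1 = k_2 \Rightarrow \lfloor \gamma k_1 + \delta \rfloor = \lfloor \gamma k_2 + \delta \rfloor$ is immediate, so the whole content lies in showing that $k_1 \neq k_2$ forces $\lfloor \gamma k_1 + \delta \rfloor \neq \lfloor \gamma k_2 + \delta \rfloor$.

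First I would assume, without loss of generality, that $k_1 < k_2$. Since $k_1, k_2 \in \mathbb{N}$, this gives $k_2 - k_1 \geq 1$, and therefore
\[
(\gamma k_2 + \delta) - (\gamma k_1 + \delta) = \gamma (k_2 - k_1) \geq \gamma \geq 1,
\]
so that $\gamma k_2 + \delta \geq (\gamma k_1 + \delta) + 1$. Next I would invoke two elementary properties of the floor function: it is non-decreasing, and $\lfloor t + 1 \rfloor = \lfloor t \rfloor + 1$ for every real $t$. Applying these to $t = \gamma k_1 + \delta$ yields
\[
\lfloor \gamma k_2 + \delta \rfloor \geq \lfloor (\gamma k_1 + \delta) + 1 \rfloor = \lfloor \gamma k_1 + \delta \rfloor + 1 > \lfloor \gamma k_1 + \delta \rfloor,
\]
and in particular the two floors are distinct, as required.

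There is essentially no serious obstacle in this argument; the only point requiring attention is the precise role of the hypothesis $\gamma \geq 1$, which is exactly what guarantees that consecutive terms of the sequence $\{\gamma k + \delta\}$ are separated by a gap of at least $1$ and hence cannot share a common floor. I would emphasize that the conclusion genuinely relies on this assumption, since for $\gamma < 1$ two distinct indices can produce the same floor and the map $f$ in \eqref{iteration} would no longer be well-defined.
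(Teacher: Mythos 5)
Your proof is correct and is essentially the paper's own argument: both rest on the observation that $\gamma(k_2-k_1)\geq\gamma\geq 1$ together with monotonicity of the floor function and $\lfloor t+1\rfloor=\lfloor t\rfloor+1$, the only cosmetic difference being that you state the contrapositive directly while the paper runs the same inequality chain as a proof by contradiction.
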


\begin{proof}
    If $k_1=k_2$, then it is clear to see that $ \gamma k_1+\delta= \gamma k_2+\delta$ and so $\left\lfloor \gamma k_1+\delta\right\rfloor= \left\lfloor \gamma k_2+\delta\right\rfloor$. Conversely, if $\left\lfloor \gamma k_1+\delta\right\rfloor= \left\lfloor \gamma k_2+\delta\right\rfloor$ with $k_1< k_2$ (similar for the case $k_1>k_2$), then by $\gamma\geq 1$, we have
    \begin{align*}
        &\left\lfloor \gamma k_1+\delta\right\rfloor= \left\lfloor \gamma k_2+\delta\right\rfloor=\left\lfloor \gamma k_1+\gamma(k_2-k_1)+\delta\right\rfloor\\
        \geq&  \left\lfloor \gamma k_1+\gamma+\delta\right\rfloor\geq\left\lfloor \gamma k_1+1+\delta\right\rfloor=\left\lfloor \gamma k_1+\delta\right\rfloor+1,
    \end{align*}
    which implies $0\geq 1$ and thus gives a contradiction. The proof is complete.
\end{proof}

\begin{proof}[Proof of Theorem \ref{Thm: 1}]     
\item[\bf 1.] Note that 
\begin{equation}\label{eq pf2}
    \mathbb{N}=\left(\bigsqcup_{i=1}^\infty \bigsqcup_{x\in A_i}\{x, ..., f^{i-1}(x)\}\right) \sqcup \bigsqcup_{x\in A_\infty}\{x, f(x), ...\}\sqcup R,
\end{equation}
where $R$ is the remaining elements in $\mathbb{N}$. In fact, it can be verified that \[
\lim_{|n_1-n_2|\to\infty}\frac{|R\cap [n_1,n_2]|}{|n_1-n_2|}=0
\] by using (\ref{eq est}) with the fact $|\left\{ \left\lfloor \tau k+\eta \right\rfloor :k\in 
\mathbb{N}\right\} \setminus \mathbb{N}|<\infty$ for $\tau \geq 1$ and $\eta \in\mathbb{R}$. In order to simplify the proof, in the following, we omit $R$ in the decomposition (\ref{eq pf2}) of $\mathbb{N}$.
     
Now, we estimate the upper and lower bounds of $f^{i-1}(x)$ with respect to $x$, where $x\in A_i$ and $ i\in \mathbb{N}\cup\{\infty\}$. For $ i\in \mathbb{N}\cup\{\infty\}$ and $x\in A_i$, we have
    \begin{equation*}
        x=\left\lfloor \alpha k+\beta \right\rfloor\mbox{ for some }k\in \mathbb{N}.
    \end{equation*}
Then,
\begin{equation*}
    x\leq \alpha k+\beta <x+1.
\end{equation*}
Thus,
\begin{equation*}
   \frac{x-\beta}{\alpha} \leq k <\frac{x+1-\beta}{\alpha}.
\end{equation*}
Then,
\begin{equation*}
   \frac{\gamma (x-\beta)}{\alpha}+\delta-1\leq \gamma k+\delta -1<  \left\lfloor \gamma k+\delta \right\rfloor\leq \gamma k+\delta <\frac{\gamma (x+1-\beta)}{\alpha}+\delta.
\end{equation*}
Hence,
\begin{equation}\label{eq pf1}
    \frac{\gamma (x-\beta)}{\alpha}+\delta-1 <f(x)<\frac{\gamma (x+1-\beta)}{\alpha}+\delta.
\end{equation}
Then, by (\ref{eq pf1}), we have that for $0\leq \ell\leq i-1$, 
    \begin{align*}
 f^\ell(x)<&\left(\frac{\gamma}{\alpha}\right)^\ell \left(x+(1-\beta)\sum_{i=1}^{\ell} \frac{1}{\left(\frac{\gamma}{\alpha}\right)^{i-1}}+\delta\sum_{i=1}^{\ell} \frac{1}{\left(\frac{\gamma}{\alpha}\right)^{i}}\right)\\
   \leq &\left(\frac{\gamma}{\alpha}\right)^\ell \left[x+(1+|\beta|+|\delta|)\sum_{i=0}^{\ell} \frac{1}{\left(\frac{\gamma}{\alpha}\right)^{i}}\right]\\
   \leq &\left(\frac{\gamma}{\alpha}\right)^\ell \left[x+(1+|\beta|+|\delta|)\sum_{i=0}^{\infty} \frac{1}{\left(\frac{\gamma}{\alpha}\right)^{i}}\right]\\
   =&\left(\frac{\gamma}{\alpha}\right)^\ell \left[x+(1+|\beta|+|\delta|)\frac{\alpha}{\gamma-\alpha}\right],
\end{align*}
and
  \begin{align*}
  f^\ell(x)>&\left(\frac{\gamma}{\alpha}\right)^\ell \left(x-\beta\sum_{i=1}^{\ell} \frac{1}{\left(\frac{\gamma}{\alpha}\right)^{i-1}}+(\delta-1)\sum_{i=1}^{\ell} \frac{1}{\left(\frac{\gamma}{\alpha}\right)^{i}}\right)\\
  \geq &\left(\frac{\gamma}{\alpha}\right)^\ell \left[x-(1+|\beta|+|\delta|)\sum_{i=0}^{\ell} \frac{1}{\left(\frac{\gamma}{\alpha}\right)^{i}}\right]\\
  =&\left(\frac{\gamma}{\alpha}\right)^\ell \left[x-(1+|\beta|+|\delta|)\sum_{i=0}^{\infty} \frac{1}{\left(\frac{\gamma}{\alpha}\right)^{i}}\right]\\
  =&\left(\frac{\gamma}{\alpha}\right)^\ell \left[x-(1+|\beta|+|\delta|)\frac{\alpha}{\gamma-\alpha}\right].
\end{align*}
Thus,
\begin{equation}\label{eq est}
\left(\frac{\gamma}{\alpha}\right)^\ell \left(x-C\right)< f^\ell(x)<\left(\frac{\gamma}{\alpha}\right)^\ell \left(x+C\right),
\end{equation}
where $C=(1+|\beta|+|\delta|)\frac{\alpha}{\gamma-\alpha}$.

For $n\geq 1$, by (\ref{eq pf2}), we have
\begin{equation}\label{eq pf3}
    [1,n]= \left(\bigsqcup_{i=1}^\infty \bigsqcup_{x\in A_i}\{x, ..., f^{i-1}(x)\}\cap [1,n]\right) \sqcup \bigsqcup_{x\in A_\infty}\{x, f(x), ...\}\cap [1,n].
\end{equation}
By (\ref{eq est}), we have
\begin{equation}\label{eq pf4}
\begin{aligned}
     \bigsqcup_{x\in A_i}\{x, ..., f^{i-1}(x)\}\cap [1,n]=& \bigsqcup_{x\in A_i\cap E_i(n)}\{x, ..., f^{i-1}(x)\}\\
     &\sqcup \bigsqcup_{\ell=0}^{i-2}\bigsqcup_{x\in A_i\cap F_\ell(n)}\{x, ..., f^{\ell}(x)\},
\end{aligned}
\end{equation}
where $E_i(n)=\left[1, \frac{n\pm C}{\left(\frac{\gamma}{\alpha}\right)^{i-1}}\right]$ and $F_\ell(n)=\left(\frac{n\pm C}{\left(\frac{\gamma}{\alpha}\right)^{\ell+1}}, \frac{n\pm C}{\left(\frac{\gamma}{\alpha}\right)^{\ell}}\right]$.

Then, by (\ref{eq pf3}) and (\ref{eq pf4}), we have
\begin{equation}\label{eq pf5}
  \begin{aligned}  \left|\mathcal{P}\left(X_A^{\alpha,\gamma;\beta,\delta}, [1, n]\right)\right|
  =&\prod_{i=1}^\infty \left\{\prod_{x\in A_i\cap E_i(n)}\left|\mathcal{P}(X_A^{\alpha,\gamma;\beta,\delta} , \{x, ..., f^{i-1}(x)\})\right |\right.\\
  &\left.\times \prod_{\ell=0}^{i-2}\prod_{x\in A_i\cap F_\ell(n)}\left|\mathcal{P}\left(X_A^{\alpha,\gamma;\beta,\delta}, \{x, ..., f^{\ell}(x)\}\right)\right| \right\}\\
  &\times\prod_{\ell=0}^{\infty}\prod_{x\in A_\infty\cap F_\ell(n)}\left|\mathcal{P}\left(X_A^{\alpha,\gamma;\beta,\delta}, \{x, ..., f^{\ell}(x)\}\right)\right|.
  \end{aligned}
\end{equation}
Since $\left|\mathcal{P}\left(X_A^{\alpha,\gamma;\beta,\delta}, \{x, ..., f^{\ell}(x)\}\right)\right|=\left|A^\ell \right|$, by (\ref{eq pf5}), we have
  \begin{align*}  \left|\mathcal{P}\left(X_A^{\alpha,\gamma;\beta,\delta}, [1, n]\right)\right|
  =&\prod_{i=1}^\infty \left\{ \left|A^{i-1}\right|^{\left| A_i\cap E_i(n)\right|}\times \prod_{\ell=0}^{i-2} \left|A^\ell\right|^{\left| A_i\cap F_\ell(n)\right|} \right\}\\
&\times\prod_{\ell=0}^{\infty}\left|A^{\ell}\right|^{\left|A_\infty\cap F_\ell(n)\right|}.
  \end{align*}
Hence,
\begin{equation}\label{eq pf6}
    \begin{aligned}
&\frac{\log_m\left|\mathcal{P}\left(X_A^{\alpha,\gamma;\beta,\delta}, [1, n]\right)\right|}{n}\\ =&\sum_{i=1}^\infty \left\{\frac{\left| A_i\cap E_i(n)\right|}{n} \log_m\left|A^{i-1}\right|+ \sum_{\ell=0}^{i-2} \frac{\left| A_i\cap F_\ell(n)\right|}{n}\log_m\left|A^\ell\right| \right\}\\
  &+\sum_{\ell=0}^{\infty}\frac{\left|A_\infty\cap F_\ell(n)\right|}{n}\log_m\left|A^{\ell}\right|.
  \end{aligned}
\end{equation}
By (\ref{1}), we have 
\begin{equation}\label{eq pf7}
    \begin{aligned}
        \lim_{n\to\infty}\frac{\left| A_i\cap E_i(n)\right|}{n} =&\frac{1}{(\frac{\gamma}{\alpha})^{i-1}}d_i,\\
        \lim_{n\to\infty}\frac{\left| A_i\cap F_\ell(n)\right|}{n}=&\left[\frac{1}{(\frac{\gamma}{\alpha})^{\ell}}-\frac{1}{(\frac{\gamma}{\alpha})^{\ell+1}}\right]d_i,\\
        \lim_{n\to\infty}\frac{\left|A_\infty\cap F_\ell(n)\right|}{n}=&\left[\frac{1}{(\frac{\gamma}{\alpha})^{\ell}}-\frac{1}{(\frac{\gamma}{\alpha})^{\ell+1}}\right]d_\infty.
    \end{aligned}
\end{equation}
Then, by (\ref{eq pf6}) and (\ref{eq pf7}), we obtain 
   \begin{align*}
    \dim_M X^{[\alpha,\beta,\gamma,\delta]}_A =&\sum_{i=1}^\infty\left\{\frac{1}{(\frac{\gamma}{\alpha})^{i-1}}d_i+\left[\frac{1}{(\frac{\gamma}{\alpha})^{i-1}}-\frac{1}{(\frac{\gamma}{\alpha})^{i}}\right]\left(\sum_{j=i+1}^\infty d_j+d_{\infty}\right)\right\}\\
    &\times \log_m|A^{i-1}|.
    \end{align*}
    The proof is complete.
\item[\bf 2.]
For $i\in\mathbb{N}$, let $\mu_i$ be the probability measure defined in \cite[Theorem 1.3 (2)]{ban2024hausdorffaffine} with $P_{i,j}$ replaced by $d_{i,j}$. And let $\mu_\infty$ be the probability measure defined in \cite[Theorem 1.3 (2)]{ban2024hausdorffaffine} with $q_1$ replaced by $\frac{\gamma}{\alpha}$ (also see \cite[Corollary 2.6]{kenyon2012hausdorff} with $q$ replaced by $\frac{\gamma}{\alpha}$). Define a probability measure on $X^{[\alpha,\beta,\gamma,\delta]}_A$ by defining the measure on cylinder sets in $\alpha_n$ of $X^{[\alpha,\beta,\gamma,\delta]}_A$. That is, for any $[x_1,...,x_n]\in \alpha_n$,
    \begin{align*}
        \mathbb{P}_{\infty}([x_1\cdots x_n])&:=\prod_{i=1}^\infty\prod_{x\in A_i}\mu_i([x_{U}])\times \prod_{x\in A_\infty} \mu_\infty ([x_U]),
    \end{align*}
where $U=\{x, f(x), ...\}\cap [1, n]$, $\mu_1([i_1])=\frac{1}{m}$, and for $i\geq 2$,
\begin{align*}
    &\mu_{i}([j_1j_2\cdots j_i])=A(j_1,j_2)\cdots A(j_{i-1},j_i)\frac{t_{j_1,...,j_i;i}}{t_{\phi;i}},
\end{align*}
with
\begin{align*}
    t_{j_1,...,j_i;i}&=\prod_{k=1}^{i-1}f_k(j_{i-k},i),
\end{align*}
and for $1\leq k \leq i-1$, $f_{k}(j_{i-k},i)$ is defined recursively as
\begin{align*}
f_k(j_{i-k},i)&=\left[\sum_{b_1,b_2,...,b_{k-1}=0}^{m-1}A(j_{i-k},b_1)\prod_{e=1}^{k-2}A(b_e,b_{e+1})a_{b_{k-1}}\prod_{c=1}^{k-1}f_{c}(b_{k-c},i)\right]^{\frac{-d_{i,i-k}}{\sum_{\ell=0}^k d_{i,i-\ell}}},
\end{align*}
 and
\begin{equation}\label{eq 3.1-2}
    t_{\phi;i}=\sum_{j_1,...,j_i=0}^{m-1}A(j_1,j_2)A(j_2,j_3)\cdots A(j_{i-1},j_i)t_{j_1,...,j_i;i},
\end{equation}
and for $1\leq k \leq i-1$,
\begin{equation*}
    \mu_{i}([j_1j_2\cdots j_k])=\sum_{\ell=0}^{m-1}\mu_{i}([j_1j_2\cdots j_k \ell]),
\end{equation*}
and $\mu_\infty$ is Markov measure with the vector of initial
probabilities ${\bf p} = (\sum_{i=0}^{m-1}  ti)^{-1} (t_i)_{i=0}^{m-1}$, $t_{i}^{\frac{\gamma}{\alpha}}=\sum_{j=0}^{m-1}A(i,j)t_{j}$, and the matrix of transition probabilities
\begin{equation*}
    P(i,j)=\frac{t_j}{t_i^{\frac{\gamma}{\alpha}}} \mbox{ if } A(i, j) = 1, \mbox{ and otherwise }0.
\end{equation*}

Then, by the similar process as proof of \cite[Theorem 1.3 (2)]{ban2024hausdorffaffine}, the proof is complete.
\end{proof}

\begin{proof}[Proof of Theorem \ref{Thm: 2}]
\item[\bf 1.] Observe that for $1<\gamma<2$, and for any $\frac{|\beta|+|\delta|+2}{\gamma-1}\leq k \in \mathbb{N}$, 
\begin{equation*}
    \left\lfloor k+\beta \right\rfloor\geq \left\lfloor \frac{|\beta|+|\delta|+2}{\gamma-1}+\beta \right\rfloor\geq  \left\lfloor \frac{2}{\gamma-1} \right\rfloor\geq 1,
\end{equation*}
and if $\frac{|\beta|+|\delta|+2}{\gamma-1}+j\leq k < \frac{|\beta|+|\delta|+2}{\gamma-1}+j+1$ for some $j\in \mathbb{N}$, then
\begin{align*}
     \left\lfloor \gamma k+\delta \right\rfloor\geq&  \left\lfloor \gamma \frac{|\beta|+|\delta|+2}{\gamma-1}+\gamma j+\delta \right\rfloor\\
     =&\left\lfloor \frac{|\beta|+|\delta|+2}{\gamma-1}+\gamma j+1+|\beta|+|\delta|+\delta \right\rfloor+1\\
     \geq &\left\lfloor \frac{|\beta|+|\delta|+1}{\gamma-1}+j+1+\beta\right\rfloor+1\\
     \geq &\left\lfloor k+\beta \right\rfloor+1.
\end{align*}
This implies that $\left\lfloor k+\beta \right\rfloor\in S(1,\beta)$, $f(\left\lfloor k+\beta \right\rfloor)\in S(\gamma, \delta)$, and 
\begin{equation}\label{pfthm2-1-1}
    f(\left\lfloor k+\beta \right\rfloor)=\left\lfloor k'+\beta \right\rfloor\mbox{ for some positive integer }k'>k.
\end{equation}
Thus, there are at most $\frac{|\beta|+|\delta|+2}{\gamma-1}$ member in $A_i$ for all $i\geq 2$, i.e. $|A_i|\leq \frac{|\beta|+|\delta|+2}{\gamma-1}$. This implies that for $i\geq 2$, 
\begin{equation*}
    0\leq \frac{|A_i\cap [n_1, n_2]|}{|n_1-n_2|}\leq \frac{|A_i|}{|n_1-n_2|}\leq \frac{\frac{|\beta|+|\delta|+2}{\gamma-1}}{|n_1-n_2|}\to 0
\end{equation*}
as $|n_1-n_2|\to \infty$. Hence, $d_i=0$ for all $i\geq 2$. 

On the other hand, since for any $|\beta|+1\leq x\in \mathbb{N}$, $x=\left\lfloor k+\beta\right\rfloor$ for some $k\in\mathbb{N}$, we have 
\begin{equation}\label{pfthm2-1-3}
    x\in S(1,\beta).
\end{equation}
Thus, $|A_1|\leq |\mathbb{N}\setminus S(1,\beta)|\leq |\beta|+1$. This implies $d_1=0$.

Finally, by the definition of $A_\infty$ and (\ref{pfthm2-1-1}), we have
\begin{align*}
   A_\infty\subseteq\{x\in S(1,\beta)\setminus S(\gamma, \delta)\},
\end{align*}
and
\begin{align*}
   \{x\in S(1,\beta)\setminus S(\gamma, \delta): x=\left\lfloor k+\beta \right\rfloor\mbox{ with }k\geq \frac{|\beta|+|\delta|+2}{\gamma-1}\}\subseteq A_\infty.
\end{align*}
Hence, we obtain
\begin{equation}\label{pfthm2-1-2}
  \begin{aligned}
  &|[S(1, \beta)\setminus S(\gamma, \delta)]\cap [n_1, n_2]|-c \\
  \leq&  |A_\infty\cap [n_1, n_2]|\\
  \leq& |[S(1, \beta)\setminus S(\gamma, \delta)]\cap [n_1 , n_2]|\\
  \leq& |[n_1,n_2]\setminus S(\gamma, \delta)|,
\end{aligned}  
\end{equation}
where $c=\frac{|\beta|+|\delta|+2}{\gamma-1}$. 

Recall that (\ref{pfthm2-1-3}) gives that $S(1,\beta)=\mathbb{N}$ excepts a finite number, say $d\leq |\beta|+1$, of member of $\mathbb{N}$. This implies
\begin{equation}\label{pfthm2-1-4}
    |[S(1, \beta)\setminus S(\gamma, \delta)]\cap [n_1, n_2]|\geq |[n_1,n_2]\setminus S(\gamma, \delta)|-d
\end{equation}

Therefore, by (\ref{pfthm2-1-2}) and (\ref{pfthm2-1-4}), we have
\begin{align*}
   \frac{|[n_1,n_2]\setminus S(\gamma, \delta)|-c-d}{|n_1-n_2|}\leq  \frac{|A_\infty\cap [n_1, n_2]|}{|n_1-n_2|}\leq \frac{|[n_1,n_2]\setminus S(\gamma, \delta)|}{|n_1-n_2|}\to \frac{1}{\gamma}
\end{align*}
as $|n_1-n_2|\to\infty$. Thus, $d_\infty=\frac{1}{\gamma}$.

For the case $\gamma\geq 2$, the proof is similar to the case $1<\gamma <2$ by replacing $\frac{|\beta|+|\delta|+2}{\gamma-1}$ by $|\beta|+|\delta|+2$ in the beginning of the proof.

\item[\bf 2.] See Section \ref{appendix} for details.

\item[\bf 3.] Note that if $1<\tau\in \mathbb{Q}^c$ and $\eta\in \mathbb{R}$, then $x=\left\lfloor \tau k+\eta \right\rfloor$ is equivalent to 
\begin{equation*}
    0<\{x\tau'+\eta'\}\leq \tau',
\end{equation*}
where $\{n\}$ is the fractional part of $n$, and 
\begin{equation*}
    \tau'=\frac{1}{\tau}\mbox{ and }\eta'=\frac{1-\eta}{\tau}.
\end{equation*}

For the case $(\alpha,\beta, \gamma, \delta)\in \langle 3\rangle$, due to the fact that $\{\frac{1}{\alpha} x\}_{x\in \mathbb{N}}$ (resp. $\{\frac{1}{\gamma} x\}_{x\in \mathbb{N}}$) is uniformly distributed in $[0,1)$, we have that for any $x\in \mathbb{N}$, $x\in S(\alpha, \beta)$ has probability $\frac{1}{\alpha}$. On the other hand, since $\gamma=\frac{b}{a}\in \mathbb{Q}$ with $b>a>1$, $(a,b)=1$ gives that $S(\gamma, \delta)$ (resp. $S(\alpha, \beta)$) is an $a$-arithmetic progression (mod $b$) (see Section \ref{appendix} for the precise form), so we have 
\begin{equation*}
    d_1=\left(1-\frac{1}{\alpha}\right)\left(1-\frac{1}{\gamma}\right)=\frac{(\alpha-1)(\gamma-1)}{\alpha \gamma},
\end{equation*}
and for $i\geq 2$,
\begin{equation*}
   d_i=\left[\left(1-\frac{1}{\gamma}\right)\frac{1}{\alpha}\right]\left(\frac{1}{\alpha}\right)^{i-2} \left(1-\frac{1}{\alpha}\right)= \frac{\left( \alpha -1\right) \left( \gamma -1\right) }{\alpha ^{i}\gamma },
\end{equation*}
and 
\begin{equation*}
   d_\infty=\left[\left(1-\frac{1}{\gamma}\right)\frac{1}{\alpha}\right]\left(\frac{1}{\alpha}\right)^{\infty}=0.
\end{equation*}.

For the case $(\alpha,\beta, \gamma, \delta)\in \langle 4\rangle$, by the proof of \cite[Theorem 1]{harman2015primes}, if $\{1, \frac{1}{\alpha}, \frac{1}{\gamma}\}$ is linearly independent over $\mathbb{Q}$, then the fractional parts $\{\frac{1}{\alpha}x\}_{x\in\mathbb{N}}$ and $\{\frac{1}{\gamma}x\}_{x\in\mathbb{N}}$ are uniformly distributed in $[0,1)^2$. Thus, due to the uniformness, we have that for any $x\in \mathbb{N}$, $x\in S(\alpha, \beta)$ has probability $\frac{1}{\alpha}$ and $x\in S(\gamma, \delta)$ has probability $\frac{1}{\gamma}$, and this implies 
\begin{equation*}
    d_1=\left(1-\frac{1}{\alpha}\right)\left(1-\frac{1}{\gamma}\right)=\frac{(\alpha-1)(\gamma-1)}{\alpha \gamma},
\end{equation*}
and for $i\geq 2$,
\begin{equation*}
   d_i=\left[\left(1-\frac{1}{\gamma}\right)\frac{1}{\alpha}\right]\left(\frac{1}{\alpha}\right)^{i-2} \left(1-\frac{1}{\alpha}\right)= \frac{\left( \alpha -1\right) \left( \gamma -1\right) }{\alpha ^{i}\gamma },
\end{equation*}
and 
\begin{equation*}
   d_\infty=\left[\left(1-\frac{1}{\gamma}\right)\frac{1}{\alpha}\right]\left(\frac{1}{\alpha}\right)^{\infty}=0.
\end{equation*}

\item[\bf 4.] By \cite[Theorems 6 and 8]{harman2015primes} with (i) and (ii) respectively, we have that the set $S(\alpha,\beta)\cap S(\gamma,\delta)$ contains at most one element, i.e.,
\begin{equation}\label{pfthm2-4-1}
    |S(\alpha,\beta)\cap S(\gamma,\delta)|\leq 1.
\end{equation}
Since
\begin{equation*}
     A_1\cap [n_{1},n_{2}]=[n_1, n_2]\setminus [(S(\alpha, \beta)\cap [n_1, n_2])\cup (S(\gamma, \delta)\cap [n_1, n_2])],
\end{equation*}
we have
\begin{equation}\label{pfthm2-4-2}
 \begin{aligned}
    |A_1\cap [n_{1},n_{2}]|=&|[n_{1},n_{2}]|-|S(\alpha, \beta)\cap [n_1, n_2]|-|S(\gamma, \delta)\cap [n_1, n_2]|\\
    &+|S(\alpha, \beta)\cap S(\gamma, \delta)\cap [n_1, n_2]|.
\end{aligned}   
\end{equation}
Thus, by (\ref{pfthm2-4-1}) and (\ref{pfthm2-4-2}),
\begin{align*}
d_1=&\lim_{\left\vert n_{1}-n_{2}\right\vert \rightarrow \infty }\frac{\left\vert A_1\cap \lbrack n_{1},n_{2}]\right\vert }{\left\vert
n_{1}-n_{2}\right\vert }\\
=&\lim_{\left\vert n_{1}-n_{2}\right\vert \rightarrow \infty }1-\frac{|S(\alpha, \beta)\cap [n_1, n_2]|}{|n_1-n_2|}-\frac{|S(\gamma, \delta)\cap [n_1, n_2]|}{|n_1-n_2|}+\frac{c}{|n_1-n_2|}\\
=&1-\frac{1}{\alpha}-\frac{1}{\gamma},
\end{align*}
where $c$ is a constant equal to 0 or 1.

On the other hand, by (\ref{pfthm2-4-1}), we have
\begin{equation*}
    |A_2\cap [n_1, n_2]|=|S(\alpha, \beta)\cap [n_1, n_2]|\pm 2c.
\end{equation*}
Hence,
\begin{equation*}
    d_2=\lim_{\left\vert n_{1}-n_{2}\right\vert \rightarrow \infty }\frac{\left\vert A_2\cap \lbrack n_{1},n_{2}]\right\vert }{\left\vert
n_{1}-n_{2}\right\vert }=\lim_{\left\vert n_{1}-n_{2}\right\vert \rightarrow \infty }\frac{|S(\alpha, \beta)\cap [n_1, n_2]|\pm 2c}{\left\vert
n_{1}-n_{2}\right\vert }=\frac{1}{\alpha}.
\end{equation*}

For $i\geq 3$ and $i=\infty$, by (\ref{pfthm2-4-1}), $|A_i|=c$. This implies $d_i=0$. The proof is complete.
\end{proof}

\section{Appendix}\label{appendix}
 Let $1<\alpha<\gamma\in \mathbb{Q}$, we may assume that $\alpha=\frac{b}{a}, \gamma=\frac{d}{c}$ with $(a,b)=1, (c,d)=1$. Since 
\begin{align*}
     \left\lfloor \frac{b}{a}k\right\rfloor=b\ell+ \left\lfloor \frac{b}{a}h\right\rfloor~(0\leq h\leq a-1),\\
      \left\lfloor \frac{d}{c}k\right\rfloor=d\ell+ \left\lfloor \frac{d}{c}h\right\rfloor~(0\leq h\leq c-1),
\end{align*}
we have that $\mathbb{N}$ can be decomposed into the disjoint union of the following sets:
\begin{align*}
R_a^b(\beta):=&\left\{\left\lfloor \beta+\frac{bh}{a}\right\rfloor (\mbox{mod } b) : 0\leq h\leq a-1\right\}\mbox{ and }(R_a^b)^c:=\{0, ..., b-1\}\setminus R_a^b(\beta),\\
S_a^b:=& (b\mathbb{N}+R_a^b(\beta))\cap \mathbb{N},\\
    A_1=&\mathbb{N}\setminus (S_a^b\cup S_c^d),\\
    f:&S_a^b\to S_c^d,~ f(x):=\left\lfloor \frac{d}{c}\left(a\cdot \frac{x-\left\lfloor \frac{b}{a}h\right\rfloor}{b}+h\right)\right\rfloor,~\mbox{if }x\equiv \left\lfloor \frac{b}{a}h\right\rfloor(\mbox{mod } b)~(0\leq h\leq a-1),\\
    A_2=&\left\{x\in S_a^b \setminus S_c^d:f(x)\in S_c^d \setminus S_a^b\right\},\\
    A_i=&\left\{x\in S_a^b \setminus S_c^d:f^{j}(x)\in S_a^b \cap S_c^d ~(\forall 1\leq j\leq i-2)\mbox{ and }f^{i-1}(x)\in S_c^d\setminus S_a^b \right\}~(i\geq 3),\\
    A_{\infty}=&\left\{x\in S_a^b \setminus S_c^d:f^{j}(x)\in S_a^b \cap S_c^d ,~\forall  j\geq 1 \right\}.
\end{align*}

\begin{lemma}\label{lma 1}
    Let $a,b\in\mathbb{N}$ and $0\leq i\leq a-1, 0\leq j\leq b-1$. We have
    \begin{equation*}
          g(a,b,i,j):=\lim_{|n_1-n_2|\to\infty}\frac{|a\mathbb{N}+i\cap b\mathbb{N}+j\cap [n_1,n_2]|}{|n_1-n_2|}=\left\{\begin{array}{cc}
         \frac{(a,b)}{ab},~&\mbox{if }(a,b)|(i-j),  \\
         0,~&\mbox{if }(a,b)\nmid(i-j).
    \end{array}\right.
    \end{equation*}  
\end{lemma}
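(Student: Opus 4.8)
The plan is to identify the intersection $\left(a\mathbb{N}+i\right)\cap\left(b\mathbb{N}+j\right)$ as the solution set of a pair of simultaneous congruences and then apply the Chinese Remainder Theorem for general (not necessarily coprime) moduli. An integer $x$ lies in $a\mathbb{N}+i$ exactly when $x\equiv i\pmod a$, and in $b\mathbb{N}+j$ exactly when $x\equiv j\pmod b$; hence $x$ belongs to the intersection precisely when it satisfies both congruences at once, up to the finitely many terms near the start of $\mathbb{N}$, which contribute nothing to the density. Since the density is taken over intervals $[n_1,n_2]$ with $|n_1-n_2|\to\infty$, these bounded boundary effects are harmless throughout.

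First I would dispose of the incompatible case. If $(a,b)\nmid(i-j)$, then the system $x\equiv i\pmod a$, $x\equiv j\pmod b$ has no solution, because any common solution would force $i\equiv j\pmod{(a,b)}$, contradicting the hypothesis. Thus $\left(a\mathbb{N}+i\right)\cap\left(b\mathbb{N}+j\right)$ is empty (again modulo finitely many terms) and $g(a,b,i,j)=0$. In the compatible case $(a,b)\mid(i-j)$, the generalized CRT guarantees that the simultaneous congruences are solvable and that the complete solution set forms a single residue class modulo $\mathrm{lcm}(a,b)=\frac{ab}{(a,b)}$. Consequently, for any interval the number of solutions it contains equals $\frac{|n_1-n_2|}{\mathrm{lcm}(a,b)}+O(1)$, where the $O(1)$ absorbs the endpoints; dividing by $|n_1-n_2|$ and letting $|n_1-n_2|\to\infty$ yields $g(a,b,i,j)=\frac{1}{\mathrm{lcm}(a,b)}=\frac{(a,b)}{ab}$, as claimed.

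If one prefers to avoid invoking the generalized CRT as a black box, the compatible case can be made fully explicit via B\'ezout. Writing $(a,b)=ua+vb$ and using $(a,b)\mid(i-j)$ to set $t=\frac{i-j}{(a,b)}$, one checks directly that $x_0=i-t\,u\,a$ satisfies both $x_0\equiv i\pmod a$ and $x_0-j=t\,v\,b\equiv 0\pmod b$, producing a solution; and since the difference of any two solutions is divisible by both $a$ and $b$, it is divisible by $\mathrm{lcm}(a,b)$, so the solution set is exactly one class modulo $\mathrm{lcm}(a,b)$. The argument is entirely elementary, and I expect no genuine obstacle beyond the routine bookkeeping of the bounded boundary terms in the interval count; the only conceptual point is the compatibility criterion $(a,b)\mid(i-j)$ separating the empty from the nonempty case.
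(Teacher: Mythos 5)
Your proposal is correct and follows essentially the same route as the paper: both arguments show that in the compatible case $(a,b)\mid(i-j)$ the intersection is a single arithmetic progression with common difference $\mathrm{lcm}(a,b)=\frac{ab}{(a,b)}$ (the paper via an explicit B\'ezout-type parametrization $at_1-bt_2=|i-j|$, you via the generalized CRT, with your B\'ezout variant matching the paper's computation almost verbatim), while in the incompatible case the intersection is empty. The density conclusion then follows identically in both treatments.
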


\begin{proof}
    If $(a,b)|(i-j)$, then there exist $t_1,t_2\in\mathbb{Z}$ such that
    \begin{equation}\label{eq l 1}
        at_1-bt_2=|i-j|.
    \end{equation}
    One can choose $t_1,t_2$ to be the smallest nonnegative integer satisfying (\ref{eq l 1}). Then, $s_1=t_1+\frac{b}{(a,b)} m$ and $s_2=t_2+\frac{a}{(a,b)} m$ ($m> 0$) are positive integers satisfying 
    \begin{equation*}
        as_1-bs_2=|i-j|.
    \end{equation*}
    Thus,
    \begin{equation*}
        a\mathbb{N}+i\cap b\mathbb{N}+j=\{as_1+i=a(t_1+\frac{b}{(a,b)}m)+i:  m>0\}.
    \end{equation*}
    This implies that
    \begin{equation*}
         \lim_{|n_1-n_2|\to\infty}\frac{|a\mathbb{N}+i\cap b\mathbb{N}+j\cap [n_1,n_2]|}{|n_1-n_2|}=\frac{(a,b)}{ab}.
    \end{equation*}

    On the other hand, if $(a,b)\nmid(i-j)$, then $a\mathbb{N}+i\cap b\mathbb{N}+j=\emptyset$. Thus,
     \begin{equation*}
         \lim_{|n_1-n_2|\to\infty}\frac{|a\mathbb{N}+i\cap b\mathbb{N}+j\cap [n_1,n_2]|}{|n_1-n_2|}=0.
    \end{equation*}
    The proof is complete.
\end{proof}

By Lemma \ref{lma 1}, we have
\begin{equation}\label{eq case2}
\begin{aligned}
    d_1=&\lim_{|n_1-n_2|\to\infty}\frac{|A_1\cap [n_1,n_2]|}{|n_1-n_2|}\\
    =&\sum_{i\in (R_a^b)^c, j\in (R_c^d)^c}\lim_{|n_1-n_2|\to\infty}\frac{|b\mathbb{N}+i\cap d\mathbb{N}+j\cap [n_1,n_2]|}{|n_1-n_2|}\\
    =&\sum_{i\in (R_a^b)^c, j\in (R_c^d)^c}g(b,d,i,j),
     \end{aligned}
     \end{equation}
and for $i\geq 2$,
\begin{equation}\label{eq case2-1}
\begin{aligned}
     d_i=&\lim_{|n_1-n_2|\to\infty}\frac{|A_i\cap [n_1,n_2]|}{|n_1-n_2|}\\
     =&\sum_{i\in R_a^b, j\in (R_c^d)^c}g(b,d,i,j)\left(\frac{\sum_{i\in R_a^b, j\in R_c^d}g(b,d,i,j)}{\sum_{ j\in R_c^d}g(1,d,0,j)}\right)^{i-2} \frac{\sum_{i\in (R_a^b)^c, j\in R_c^d}g(b,d,i,j)}{\sum_{ j\in R_c^d}g(1,d,0,j)},
     \end{aligned}
     \end{equation}
 and
\begin{equation}\label{eq case2-2}
\begin{aligned}
     d_\infty=&\lim_{|n_1-n_2|\to\infty}\frac{|A_\infty\cap [n_1,n_2]|}{|n_1-n_2|}\\
     =&\sum_{i\in R_a^b, j\in (R_c^d)^c}g(b,d,i,j)\left(\frac{\sum_{i\in R_a^b, j\in R_c^d}g(b,d,i,j)}{\sum_{ j\in R_c^d}g(1,d,0,j)}\right)^{\infty}. 
     \end{aligned}
     \end{equation}

\begin{proof}[Proof of Corollary \ref{Cor: 1}]
    The proof is directly obtained by (\ref{eq case2}), (\ref{eq case2-1}), (\ref{eq case2-2}) and Lemma \ref{lma 1}. 
    
    More precisely, in $\langle 7\rangle$, $\alpha=1=\frac{1}{1}$ and $\gamma=\frac{\gamma}{1}$, then $a=1, b=1, c=1, d=\gamma$, $R_a^b(\beta)=\{0\}$, $R_c^d(\delta)=\{ \delta (\mbox{mod } \gamma) \}$, $(R_a^b)^c=\emptyset$, $(R_c^d)^c=\{0, ..., \gamma-1\}\setminus\{ \delta (\mbox{mod } \gamma) \}$. Then, since $(R_a^b)^c=\emptyset$ implies 
    \begin{equation*}
        \sum_{i\in (R_a^b)^c, j\in (R_c^d)^c}g(b,d,i,j)=0,
    \end{equation*}
    and
    \begin{equation}\label{pf 6-1}
   \sum_{i\in (R_a^b)^c, j\in R_c^d}g(b,d,i,j)=0,     
    \end{equation}
    we have $d_i=0$ for all $i\in \mathbb{N}$. On the other hand, since (\ref{pf 6-1}) implies 
    \begin{equation*}
       \sum_{i\in R_a^b, j\in R_c^d}g(b,d,i,j)=\sum_{ j\in R_c^d}g(1,d,0,j) ,
    \end{equation*}
    and due to the fact that
    \begin{equation*}
        \sum_{i\in R_a^b, j\in (R_c^d)^c}g(b,d,i,j)=1-\sum_{i\in R_a^b, j\in R_c^d}g(b,d,i,j),
    \end{equation*}
    by Lemma \ref{lma 1}, we have $d_\infty=1-\frac{1}{\gamma}$. Thus, ${\bf d}=(0, 0, \underline{0}, 1-\frac{1}{\gamma}) $. 
    
    In $\langle 8 \rangle$, $\alpha=\frac{\alpha}{1}$ and $\gamma=\frac{\gamma}{1}$, then $a=1, b=\alpha, c=1, d=\gamma$, $R_a^b(\beta)=\{ \beta (\mbox{mod } \alpha) \}$, $R_c^d(\delta)=\{ \delta (\mbox{mod } \gamma) \}$, $(R_a^b)^c=\{0, ..., \alpha-1\}\setminus\{ \beta (\mbox{mod } \alpha) \}$, $(R_c^d)^c=\{0, ..., \gamma-1\}\setminus\{ \delta (\mbox{mod } \gamma) \}$. Thus, since Lemma \ref{lma 1} and $(\alpha,\gamma)\nmid (\beta-\delta)$ implies 
    \begin{equation}\label{pf 7-1}
        \sum_{i\in R_a^b, j\in R_c^d}g(b,d,i,j)=0,
    \end{equation}
    we have 
    \begin{align*}
        d_1=&\sum_{i\in (R_a^b)^c, j\in (R_c^d)^c}g(b,d,i,j)\\
        =&1-\sum_{i\in R_a^b}\sum_{j=1}^d g(b,d,i,j)-\sum_{j\in R_c^d}\sum_{i=1}^b g(b,d,i,j)+\sum_{i\in R_a^b, j\in R_c^d}g(b,d,i,j)\\
        =&1-\sum_{i\in R_a^b} g(b,1,i,0)-\sum_{j\in R_c^d} g(1,d,0,j)+\sum_{i\in R_a^b, j\in R_c^d}g(b,d,i,j)\\
        =& 1-\frac{1}{\alpha}-\frac{1}{\gamma},
    \end{align*}
    and since (\ref{pf 7-1}) implies 
    \[\sum_{i\in (R_a^b)^c, j\in R_c^d}g(b,d,i,j)=\sum_{j\in R_c^d}g(1,d,0,j),\]
    we have
    \begin{align*}
        d_2=&\sum_{i\in R_a^b, j\in (R_c^d)^c}g(b,d,i,j) \frac{\sum_{i\in (R_a^b)^c, j\in R_c^d}g(b,d,i,j)}{\sum_{ j\in R_c^d}g(1,d,0,j)}\\
        =&\sum_{i\in R_a^b, j\in (R_c^d)^c}g(b,d,i,j) \\
        =&\sum_{i\in R_a^b}g(b,1,i,0)\\
        =&\frac{1}{\alpha},
    \end{align*}
    and (\ref{pf 7-1}) implies $d_i=0$ for all $i\geq 3$ and $i=\infty$. Hence, $\mathbf{d}=(1-\frac{1}{\alpha }-\frac{1}{\gamma },\frac{1}{\alpha },\underline{0},0)$. 
    
    In $\langle 9 \rangle$, $\alpha=\frac{\alpha}{1}$ and $\gamma=\frac{\gamma}{1}$, then $a=1, b=\alpha, c=1, d=\gamma$, $R_a^b(\beta)=\{ \beta (\mbox{mod } \alpha) \}$, $R_c^d(\delta)=\{ \delta (\mbox{mod } \gamma) \}$, $(R_a^b)^c=\{0, ..., \alpha-1\}\setminus\{ \beta (\mbox{mod } \alpha) \}$, $(R_c^d)^c=\{0, ..., \gamma-1\}\setminus\{ \delta (\mbox{mod } \gamma) \}$. Since $(\alpha,\gamma)\mid (\beta-\delta)$ implies $(\alpha,\gamma)\mid (\beta(\mbox{mod }\alpha)- \delta(\mbox{mod }\gamma))$, by Lemma \ref{lma 1}, we have 
    \begin{equation*}
        \sum_{i\in R_a^b, j\in R_c^d}g(b,d,i,j)=\frac{(\alpha,\gamma)}{\alpha \gamma}=\frac{1}{(\alpha
,\gamma )\alpha _{1}\gamma _{1}}.
    \end{equation*}
    Hence, 
    \begin{align*}
             d_1=&1-\sum_{i\in R_a^b} g(b,1,i,0)-\sum_{j\in R_c^d} g(1,d,0,j)+\sum_{i\in R_a^b, j\in R_c^d}g(b,d,i,j)\\
        =& 1-\frac{1}{\alpha}-\frac{1}{\gamma}+\frac{1}{(\alpha
,\gamma )\alpha _{1}\gamma _{1}}.
    \end{align*}
Note that for $i\in \{0, ..., \alpha-1\}$ and $j\in \{0, ..., \alpha \gamma_1-1\}$, if $\alpha \mid (i-j)$, then $\alpha \nmid (i'-j)$ for all $i'\in \{0, ..., \alpha -1\}\setminus \{i\}$. We have that $(\alpha,\gamma)\mid (\beta(\mbox{mod }\alpha)- \delta(\mbox{mod }\gamma))$, $\alpha=(\alpha,\gamma)$ and Lemma \ref{lma 1} implies 
\begin{equation*}\label{pf 9-2}
    \sum_{i\in (R_a^b)^c, j\in R_c^d}g(b,d,i,j)=0.
\end{equation*}
Thus, $d_i=0$ for all $i\geq 2$. Note that since $\alpha_1=1$ implies $\gamma= \alpha \gamma_1$, we have
    \begin{equation*}
        (R_c^d)^c=\{0, ..., \alpha\gamma_1-1\}\setminus\{ \delta (\mbox{mod } \alpha\gamma_1) \}.
    \end{equation*}
Now, due to the fact that $\alpha\mid (i-j) \mbox{ if and only if } i\equiv j (\mbox{mod }\alpha)$, we have that for all $i \in \{0, ..., \alpha-1\}, j\in \{0, ..., \alpha \gamma_1-1\}$,
\begin{equation*}
    \alpha\mid (i-j) \mbox{ if and only if } j=i+\ell \alpha \mbox{ for some }0\leq \ell \leq \gamma_1-1.
\end{equation*}
Thus, $\delta (\mbox{mod } \alpha\gamma_1)=\beta(\mbox{mod }\alpha)+\ell \alpha $ for some $0\leq \ell \leq \gamma_1-1$. Hence, there are exactly $\gamma_1-1$ many members $j\in (R_c^d)^c$ such that $\alpha \mid (\beta(\mbox{mod }\alpha)-j)$. Combining this with Lemma \ref{lma 1}, we have
\begin{equation}\label{pf 9-1}
    \sum_{i\in R_a^b, j\in (R_c^d)^c}g(b,d,i,j)= (\gamma_1-1)\frac{(\alpha,\gamma)}{\alpha \gamma}=\frac{\gamma _{1}-1}{\alpha
\gamma_{1}}.
\end{equation}
On the other hand, Lemma \ref{lma 1} and $\alpha_1=1$ implies
\begin{equation}\label{pf 9-3}
    \frac{\sum_{i\in R_a^b, j\in R_c^d}g(b,d,i,j)}{\sum_{ j\in R_c^d}g(1,d,0,j)}=\frac{\frac{(\alpha,\gamma)}{\alpha \gamma}}{ \frac{1}{\gamma}}=1.
\end{equation}
Thus, by (\ref{pf 9-1}) and (\ref{pf 9-3}), we obtain $d_\infty=\frac{\gamma _{1}-1}{\alpha
\gamma_{1}}$. Hence,    
\[
\mathbf{d}=\left( 1-\frac{1}{\alpha }-\frac{1}{\gamma }+\frac{1}{(\alpha
,\gamma )\alpha _{1}\gamma _{1}},0,\underline{0},\frac{\gamma _{1}-1}{\alpha
\gamma_{1}}\right).
\]

In $\langle 10 \rangle$, $d_1$ is the same as in $\langle 9 \rangle$. By Lemma \ref{lma 1} , we have 
\begin{equation}\label{pf 10-1}
    \frac{\sum_{i\in R_a^b, j\in R_c^d}g(b,d,i,j)}{\sum_{ j\in R_c^d}g(1,d,0,j)}=\frac{\frac{(\alpha,\gamma)}{\alpha \gamma}}{ \frac{1}{\gamma}}=\frac{1}{\alpha_1}.
\end{equation}
Since $\alpha_1>1$, we have $d_\infty=0$. By the similar discussion as in (\ref{pf 9-1}), we obtain
\begin{equation}\label{pf 10-2}
    \sum_{i\in R_a^b, j\in (R_c^d)^c}g(b,d,i,j)=\frac{\gamma _{1}-1}{\alpha
\gamma_{1}},
\end{equation}
and 
\begin{equation}\label{pf 10-3}
    \frac{\sum_{i\in (R_a^b)^c, j\in R_c^d}g(b,d,i,j)}{\sum_{ j\in R_c^d}g(1,d,0,j)}=\frac{(\alpha_1-1)\frac{(\alpha,\gamma)}{\alpha \gamma}}{ \frac{1}{\gamma}}=\frac{\alpha_1-1}{\alpha_1}.
\end{equation}
Hence by (\ref{pf 10-2}) and (\ref{pf 10-3}), we have that for $i\geq 2$,
\begin{equation*}
    d_i=\frac{(\gamma _{1}-1)(\alpha_{1}-1)}{\gamma_{1}\alpha\alpha_{1}^{i-1}}.
\end{equation*}
Thus,
\[
\mathbf{d}=\left( 1-\frac{1}{\alpha }-\frac{1}{\gamma }+\frac{1}{(\alpha
,\gamma )\alpha _{1}\gamma _{1}},\frac{(\gamma _{1}-1)(\alpha _{1}-1)}{\gamma _{1}\alpha \alpha _{1}},\underline{d},0\right) \text{,} 
\]
where 
\[
\underline{d}=\left( \frac{(\gamma _{1}-1)(\alpha_{1}-1)}{\gamma_{1}\alpha\alpha_{1}^{i-1}}\right)_{i\geq 3}
\text{.} 
\]
 The proof is complete.
\end{proof}

\bibliographystyle{amsplain}
\bibliography{ban}
\end{document}